\def\({\left (}
\def\){\right )}
\def\<{\left\langle}
\def\>{\right\rangle}
 \newtheorem{thm}{Theorem}[section]
\newtheorem{lem}[thm]{Lemma}
\newtheorem{rem}[thm]{Remark}
\newcommand{\norm}[1]{\left\Vert#1\right\Vert}
\newcommand{\abs}[1]{\left\vert#1\right\vert}
\newcommand{\set}[1]{\left\{#1\right\}}
\newcommand{\Real}{\mathbb R}
\newcommand{\pfrac}[2]{\frac{\partial #1}{\partial #2}}
\begin{document}
\title{Neck analysis for biharmonic maps}

\numberwithin{equation}{section}
\author{Lei Liu and Hao Yin}

\address{School of Mathematical Sciences,
University of Science and Technology of China, Hefei, China}
\email{LLEI1988@mail.ustc.edu.cn}
\email{haoyin@ustc.edu.cn }

\begin{abstract}
	In this paper, we study the blow up of a sequence of (both extrinsic and intrinsic) biharmonic maps in dimension four with bounded energy and show that there is no neck in this process. Moreover, we apply the method to provide new proofs to the removable singularity theorem and energy identity theorem of biharmonic maps.
\end{abstract}

\subjclass{58E20(35J50 53C43)} \keywords{Biharmonic maps, Energy identity, Neck.}

 \maketitle

 \pagestyle{myheadings} \markright {no neck}

\section{Introduction}

In this paper, we study the neck analysis in the blow-up of a sequence of biharmonic maps in dimension four.

Suppose  $(N,h)$ is a closed Riemannian manifold which is embedded in $\Real^K$. Consider the following functionals for a map $u$ from $\Omega\subset \Real^4$ to $N$,
\begin{equation*}
	H(u)=\int_\Omega \abs{\triangle u}^2 dx,
\end{equation*}
\begin{equation*}
	T(u)=\int_\Omega \abs{\tau(u)}^2 dx
\end{equation*}
and
\begin{equation*}
	E(u)=\int_\Omega \abs{\nabla^u du}^2 dx.
\end{equation*}
Here $\tau(u)$ is the tension field of $u$, or equivalently, the tangential part of $\triangle u$ and $\nabla^u$ is the induced connection of the pullback bundle $u^*TN$. The critical points of all these functionals are called biharmonic maps. Usually, critical points of $H(u)$ are called extrinsic biharmonic maps, because the functional $H(u)$ depends on the particular embedding of $(N,h)$ into the Euclidean space. Critical points of the other two functionals are called intrinsic biharmonic maps. In this paper, we study all three types of biharmonic maps and call the critical points of $T(u)$ intrinsic Laplace biharmonic maps and the critical points of $E(u)$ intrinsic Hessian biharmonic maps.

The study of biharmonic maps was pioneered by Chang, Wang and Yang \cite{CWY}, which is followed by Wang \cite{W04,Wcpam,Wsphere}, Moser \cite{Moser}, Lamm and Rivi\`ere \cite{Lamm}, Struwe \cite{struwe}, Scheven \cite{S1,S} and many others. Most of these work is concerned with the regularity problem of biharmonic maps.

In this paper, we consider a sequence of smooth biharmonic maps $\set{u_i}$ with bounded $W^{2,2}$ norm in the critical dimension. Since the functionals are scaling invariant in this dimension, the theory is similar to the blow-up analysis of harmonic maps in dimension two. Most important of all, an $\varepsilon-$regularity lemma holds for biharmonic maps with small energy (see Theorem \ref{thm:regularity} in Section \ref{sec:pre}). Hence, routine arguments as for harmonic maps in dimension two work for biharmonic maps. It implies that we have a weak limit $u_\infty$ from $\Omega$ to $N$ and finitely many 'bubble' maps, $\omega_i:\Real^4 \to N$. Since none of the biharmonic functionals above is conformally invariant, these bubbles are not biharmonic maps from $S^4$, which is a difference from the theory of harmonic maps.

In the blow-up analysis, we are interested in the following two questions: Is there unaccounted energy in the limit? Is the image of the weak limit and the bubble maps connected? The affirmative answer to the first question is known as energy identity, or energy quantization. This has been proved for critical points of $E(u)$ by Hornung and Moser in \cite{HM}, for critical points of $H(u)$ by Wang and Zheng \cite{WZ11} and \cite{WZ12}. There is also a unified proof for both cases by Laurain and Rivi\`{e}re in \cite{LR}.

The main result of this paper is to give an affirmative answer to the second question which is
first studied in Parker's paper \cite{Parker}  for harmonic maps and in Qing and Tian's paper \cite{QT} 
for approximated harmonic maps and usually known as 'no neck' result. For simplicity, we assume that $\Omega$ is $B^4$, the unit ball in $\Real^4$ and $0\in B^4$ is the only blow-up point. We further assume that there is only one bubble $\omega:\Real^4\to N$. It follows from an induction argument of Ding and Tian \cite{DT} that the result is true for the general case (see also \cite{li} for more details). Precisely, we prove
\begin{thm}
	\label{thm:main}
	Let $u_i$ be a sequence of biharmonic maps from $B^4$ to $N$ satisfying
	\begin{equation}\label{eqn:totalenergy}
		\int_{B_1} \abs{\nabla^2 u_i}^2 +\abs{\nabla u_i}^4 dx <\Lambda
	\end{equation}
	for some $\Lambda>0$.
	Assume that there is a sequence positive $\lambda_i\to 0$ such that
	\begin{equation*}
		u_i(\lambda_i x)\to \omega
	\end{equation*}
	on any compact set $K\subset \Real^4$, that $u_i$ converges weakly in $W^{2,2}$ to $u_\infty$ and that $\omega$ is the only bubble. Then,
	\begin{equation}\label{eqn:noneck}
		\lim_{\delta\to 0} \lim_{R\to \infty} \lim_{i\to \infty} \mbox{osc}_{B_\delta(0)\setminus B_{\lambda_i R}(0)} u_i =0.
	\end{equation}
\end{thm}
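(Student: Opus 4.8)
The plan is to prove the ``no neck'' statement \eqref{eqn:noneck} by the now-standard three-circle / energy-decay scheme adapted to the fourth-order setting, decomposing the neck region $B_\delta(0)\setminus B_{\lambda_i R}(0)$ into dyadic annuli and showing that the energy on these annuli decays geometrically away from both ends. First I would fix the setup: after the usual diagonal argument one may assume that for any $\eps>0$ there exist $\delta$ small, $R$ large, and $i$ large so that on every dyadic annulus $A(r)=B_{2r}\setminus B_r$ with $\lambda_i R\le r\le \delta$ one has $\int_{A(r)}|\nabla^2 u_i|^2+|\nabla u_i|^4\,dx<\eps$; this is exactly the no-concentration statement guaranteeing that on such annuli the $\eps$-regularity Theorem \ref{thm:regularity} applies and gives pointwise estimates $r^2|\nabla^2 u_i|+r|\nabla u_i|\le C\bigl(\int_{A'(r)}|\nabla^2 u_i|^2+|\nabla u_i|^4\bigr)^{1/4}$ on a slightly shrunken annulus. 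The oscillation bound \eqref{eqn:noneck} will follow by summing $\sup_{A(r)}|\nabla u_i|\cdot r$ over the dyadic scales, so the whole problem reduces to proving that the scale-invariant energy $\Theta(r):=\int_{B_{2r}\setminus B_r}|\nabla^2 u_i|^2+|\nabla u_i|^4$ satisfies $\sum_r \Theta(r)^{1/4}<\eps$, which in turn follows from geometric decay of $\Theta$.

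The heart of the argument is therefore a Pohozaev-type identity combined with the biharmonic equation to bootstrap decay. In dimension two for harmonic maps, the key is that the ``angular energy'' controls the ``radial energy'' and one gets a differential inequality for the energy on $B_r$; here I would work instead with the biharmonic map equation $\triangle^2 u = $ (lower-order terms quadratic/quartic in $\nabla u,\nabla^2 u$, tangential to $N$), multiply by suitable radial vector fields $(x\cdot\nabla)u$ (and, because of the fourth order, also by $\triangle u$ and by $x\cdot\nabla(\triangle u)$-type quantities), and integrate over annuli to obtain a Pohozaev identity relating boundary integrals on $\partial B_r$ and $\partial B_{2r}$. The scaling-critical nature in dimension four means the bulk terms carry no definite sign a priori, but the small-energy hypothesis on each annulus lets one absorb them; this should yield an inequality of the form $\Theta(r)\le \tfrac12\bigl(\Theta(r/2)+\Theta(2r)\bigr)+ (\text{error})$, i.e. a discrete second-order difference inequality, whose solutions decay exponentially from both ends. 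Equivalently, I would prove a three-annulus lemma: if the scaled energy on three consecutive dyadic annuli is small, then the energy on the middle one is at most $\tfrac12$ of the sum on its neighbors. Feeding this back into the dyadic sum gives the required summability and hence \eqref{eqn:noneck}.

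A few technical points require care. One must control the extra boundary terms involving $\nabla^2 u_i$ and $\nabla^3 u_i$ that appear in the fourth-order Pohozaev identity; these are handled by the interior $\eps$-regularity estimates (which, being elliptic, upgrade $L^2$ control of $\nabla^2 u$ on an annulus to pointwise control of $\nabla^3 u$ on a smaller one), so that all boundary integrals are genuinely controlled by $\Theta$ on a slightly enlarged annulus. One also needs that the ``radial part'' of the energy on the neck is negligible — more precisely, that $\int_{\text{neck}} \bigl(|\partial_r u_i|^2 |\nabla u_i|^2 + |\partial_r^2 u_i|^2 + \dots\bigr)$, the part of the energy involving radial derivatives, is small; this is where the Pohozaev identity does its real work, showing the neck energy is concentrated in angular directions, which are then killed by the summing of oscillations. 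The main obstacle, I expect, is precisely the bookkeeping of the fourth-order Pohozaev identity and establishing the decay estimate with the error terms genuinely absorbable: unlike the second-order case there is no maximum principle and the multiplier identities produce many cross terms mixing $\nabla u$ and $\nabla^3 u$; getting these all controlled by $\Theta$ on a neighboring annulus, uniformly in $i$, with constants independent of the neck length, is the delicate step. Once the geometric decay of $\Theta$ is in hand, deriving \eqref{eqn:noneck} is routine summation together with the observation that the oscillation over the two transitional regions (near $B_\delta$ and near $B_{\lambda_i R}$) is controlled by the convergence $u_i\rightharpoonup u_\infty$ and $u_i(\lambda_i\cdot)\to\omega$ respectively.
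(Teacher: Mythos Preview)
Your outline has the right ingredients (three-circle decay plus Pohozaev), but there is a genuine gap in how you combine them. The three-annulus inequality you hope to prove,
\[
\Theta(r)\le \tfrac12\bigl(\Theta(r/2)+\Theta(2r)\bigr)+\text{(absorbable error)},
\]
for the \emph{full} scale-invariant energy $\Theta(r)=\int_{B_{2r}\setminus B_r}|\nabla^2 u|^2+|\nabla u|^4$, simply fails. Already for biharmonic functions on an annulus the radial modes $A+Br^2+Cr^{-2}+D\log r$ violate any such convexity; for instance $|\nabla(\log r)|^4$ contributes identically to every dyadic annulus. No amount of absorbing nonlinear error terms repairs this, because the obstruction is linear. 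The paper's three circle lemma (Theorem~\ref{thm:tcfunction}) is stated precisely under the hypothesis $\int_{S^3}f(r,\theta)\,d\theta=0$, which kills these modes; that hypothesis is not cosmetic but essential.

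What the paper actually does is decouple the two directions. First it subtracts the spherical average, setting $w_i=u_i-u_i^*$ with $u_i^*(r)=\fint_{\partial B_r}u_i$, checks that $w_i$ satisfies an ``approximate biharmonic'' equation with small coefficients (Lemma~\ref{lem:beapprox}), and applies the three-circle lemma to $F_l(w_i)=\int_{A_l}|x|^{-4}|w_i|^2$ to get exponential decay of the \emph{tangential} derivatives (Lemma~\ref{lem:point}). Only then does the Pohozaev computation enter: multiplying $\triangle^2 u\cdot\partial_t u$ and integrating over spheres yields an ODE (equation~\eqref{eqn:ode}) for the purely radial quantity $\int_{S^3}\tfrac32|\partial_t^2 u|^2+2|\partial_t u|^2$, in which the tangential derivatives appear only as a forcing term $\Theta(t)$ already known to decay exponentially. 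Integrating that ODE gives decay of the radial energy. So the logic is tangential-first-then-radial, not a single inequality for $\Theta(r)$; your description (``Pohozaev shows the neck energy is concentrated in angular directions, which are then killed by summing'') has the dependence reversed.
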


Among other things, the theorem implies that $\lim_{\abs{x}\to \infty} \omega(x)$ exists. This observation  enables us to explain the limit in (\ref{eqn:noneck}) as the length of a neck connecting the weak limit and the bubble, which is shown to be zero by the theorem. The proof of this observation is very elementary and is given at the end of Section \ref{sec:pre}.

\begin{rem}
	The biharmonic map problem is not conformally invariant. So we can not use the removable singularity theorem to see that $\lim_{\abs{x}\to\infty} \omega(x)$ exists. Note that this problem is also considered in Lemma 3.4 of \cite{Wsphere}, where the author showed that even if the problem is not conformally invariant, the method of proof can still be used.
\end{rem}

\begin{rem}
	The assumption (\ref{eqn:totalenergy}) is very natural for extrinsic biharmonic maps, since it follows from the bound of $H(u)$ and the fact that $N$ is compact. For intrinsic Hessian biharmonic maps, it is also a reasonable assumption because if the sequence has uniformly bounded energy $E(u_i)$ and uniformly controlled $\int_{\partial B_1} \abs{du_i}^2 d\sigma$, by Lemma 2.2 and Theorem 2.1 of \cite{Moser}, (\ref{eqn:totalenergy}) holds. However, for intrinsic Laplace biharmonic maps, such an assumption is rather strong and unexpected. The reason is well explained in \cite{Moser}.
	We still include this case, simply to show that our proof is robust and can be applied to a variety of equations.
\end{rem}

The proof of Theorem \ref{thm:main} requires refined understanding of the maps $u_i$ in the neck region $B_\delta\setminus B_{\lambda_i R}$. As a byproduct of this understanding, we give new proofs to other known results in the field of neck analysis. The first one is the following energy identity result, which was proved for extrinsic biharmonic maps by Wang and Zheng \cite{WZ11,WZ12}, for intrinsic Hessian biharmonic maps by Hornung and Moser \cite{HM} and for both cases by Laurain and Rivi\`ere \cite{LR}.
\begin{thm}
	\label{thm:ei}
	Let $u_i$ be a sequence of biharmonic maps from $B^4$ to $N$ satisfying
	\begin{equation*}
		\int_{B_1} \abs{\nabla^2 u_i}^2 +\abs{\nabla u_i}^4 dx <\Lambda
	\end{equation*}
	for some $\Lambda>0$.
	Assume that there is a sequence positive $\lambda_i\to 0$ such that
	\begin{equation*}
		u_i(\lambda_i x)\to \omega
	\end{equation*}
	on any compact set $K\subset \Real^4$, that $u_i$ converges weakly in $W^{2,2}$ to $u_\infty$ and that $\omega$ is the only bubble. Then,
	\begin{equation}\label{eqn:ei}
		\lim_{\delta\to 0} \lim_{R\to \infty} \lim_{i\to \infty} \int_{B_\delta\setminus B_{\lambda_i R}} \abs{\nabla^2 u}^2 +\abs{\nabla u}^4 dx =0.
	\end{equation}
\end{thm}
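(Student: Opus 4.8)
The plan is to establish a quantitative decay estimate for the energy carried by dyadic annuli inside the neck $B_\delta\setminus B_{\lambda_i R}$, and then to sum the resulting geometric series. Fix $i$ large, $R$ large, $\delta$ small, and cover the neck by the dyadic annuli $A_j:=B_{2^{-j}}\setminus B_{2^{-j-1}}$ for $j$ ranging over an interval $[m_i,n_i]$ with $2^{-m_i}\sim\delta$ and $2^{-n_i}\sim\lambda_i R$. The first step is the claim that, after the three successive limits, the energy
\begin{equation*}
	\int_{A_{j-1}\cup A_j\cup A_{j+1}}\abs{\nabla^2 u_i}^2+\abs{\nabla u_i}^4\,dx<\eps_0
\end{equation*}
holds for every $j$ in this range, where $\eps_0$ is the threshold of the $\eps$-regularity theorem (Theorem \ref{thm:regularity}). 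This is the usual consequence of the bubble--neck decomposition: if the energy at some intermediate scale failed to be small, an appropriate rescaling would extract a second nontrivial bubble, contradicting the assumption that $\omega$ is the only bubble; at the two ends of the neck the smallness follows respectively from the weak $W^{2,2}$ convergence of $u_i$ to $u_\infty$ and from the strong convergence of $u_i(\lambda_i\,\cdot)$ to $\omega$.

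Next I would rescale the $\eps$-regularity estimate to each annulus to obtain, writing $\theta(r)$ for the energy on the union of the three annuli containing the sphere of radius $r$, the pointwise bounds
\begin{equation*}
	r\,\abs{\nabla u_i(x)}+r^2\,\abs{\nabla^2 u_i(x)}+r^3\,\abs{\nabla^3 u_i(x)}\le C\,\theta(r)^{1/2},\qquad \abs{x}=r.
\end{equation*}
In particular the restriction of $u_i$ to each sphere $\partial B_r$ has small image, so on the whole neck $u_i$ takes values in a fixed coordinate chart of $N$, and the tensorial quantities appearing in the Euler--Lagrange equations of $H$, $T$ and $E$ can be expanded in these coordinates with all curvature contributions controlled by $\theta$.

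The core step is a Pohozaev-type identity. Testing the biharmonic map equation against the radial field $x\cdot\nabla u_i$ and integrating over an annulus $B_{r_2}\setminus B_{r_1}$ produces an identity whose interior bulk terms cancel because all three functionals are scaling invariant in dimension four, leaving a relation --- valid up to errors that are quadratically small in $\theta$ by the estimates of the previous step --- between boundary integrals over $\partial B_{r_1}$ and $\partial B_{r_2}$ of the radial and angular parts of $\abs{\nabla^2 u_i}^2+\abs{\nabla u_i}^4$. Feeding this into a Fubini splitting of the annular energy $F(j):=\int_{A_j}\abs{\nabla^2 u_i}^2+\abs{\nabla u_i}^4\,dx$ into radial and angular pieces yields a discrete differential inequality of the form
\begin{equation*}
	F(j)\le\tfrac14\big(F(j-1)+F(j+1)\big)+C\eps_0\big(F(j-1)+F(j)+F(j+1)\big).
\end{equation*}
A standard three-annulus (discrete maximum principle) argument then forces $F(j)$ to be dominated by a geometric sequence decaying away from both endpoints, so that
\begin{equation*}
	\int_{B_\delta\setminus B_{\lambda_i R}}\abs{\nabla^2 u_i}^2+\abs{\nabla u_i}^4\,dx\le C\big(F(m_i)+F(n_i)\big),
\end{equation*}
and the right-hand side tends to $0$ as $i\to\infty$, $R\to\infty$, $\delta\to 0$ for the same reasons as in the first step, which is (\ref{eqn:ei}).

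I expect the main obstacle to be the Pohozaev identity itself and the bookkeeping of its error terms. Because the system is fourth order, the identity involves third derivatives on the boundary spheres --- this is why the $\nabla^3$ bound in the $\eps$-regularity step is genuinely needed --- and one must verify that every error term is bounded by $\theta$ times an angular energy rather than merely by $\theta^{1/2}$ times it, since only the former closes the three-annulus iteration. A secondary difficulty is carrying this out uniformly for the three functionals $H$, $T$ and $E$, whose Euler--Lagrange equations differ by lower-order terms that must each be shown to be absorbable, the intrinsic cases requiring the coordinate expansion above to handle the curvature of $N$.
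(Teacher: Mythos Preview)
There is a genuine gap at the core step. The Pohozaev identity for biharmonic maps does not produce a convexity inequality of the form $F(j)\le\tfrac14\bigl(F(j-1)+F(j+1)\bigr)+\text{error}$ for the full annular energy. In cylindrical coordinates $r=e^t$, what one actually obtains after multiplying the equation by $\partial_t u$ and integrating over $S^3$ is an identity of the shape
\[
\partial_t\int_{S^3}\partial_t u\,\partial_t^2 u\,d\theta \;-\;\int_{S^3}\Bigl(\tfrac32\abs{\partial_t^2 u}^2+2\abs{\partial_t u}^2\Bigr)d\theta\;=\;\Theta(t),
\]
where $\Theta(t)$ is built solely from \emph{tangential} derivatives on the slice $\{t\}\times S^3$. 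Integrating over the neck bounds the total radial energy by two boundary contributions (which are $O(\varepsilon^2)$ by $\varepsilon$-regularity) plus $\int\abs{\Theta(t)}\,dt$. With only the slice-wise bound $\abs{\Theta}\le C\varepsilon^2$ coming from $\varepsilon$-regularity, this last integral is of order $\varepsilon^2\cdot\abs{\log(\delta/\lambda_iR)}$ and diverges as $i\to\infty$, so the argument does not close. Nor does the identity compare $F(j)$ with $F(j\pm1)$: it compares radial to tangential energy on the \emph{same} slice, and the tangential part is not controlled by neighbouring annuli through Pohozaev alone. Your ``Fubini splitting'' step is where the proposal breaks.

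The paper fills precisely this gap by a separate, prior mechanism. Before invoking Pohozaev it applies a three circle lemma for (approximate) biharmonic functions to the zero-mean part $w_i=u_i-u_i^*$, where $u_i^*$ is the spherical average; the point is that $w_i$ itself satisfies an approximate biharmonic equation once the nonlinearity is made small by $\varepsilon$-regularity. This yields exponential decay of $\int_{A_l}\abs{x}^{-4}\abs{w_i}^2\,dx$ from both ends of the neck, hence pointwise decay of all tangential derivatives, and therefore $\abs{\Theta(t)}\le C\varepsilon^2\bigl(e^{-(\log\delta-t)}+e^{-(t-\log\lambda_iR)}\bigr)$. Now $\int\abs{\Theta}\le C\varepsilon^2$ uniformly in $i$, and integrating the Pohozaev ODE over the whole neck gives $\int\int_{S^3}\bigl(\tfrac32\abs{\partial_t^2 u}^2+2\abs{\partial_t u}^2\bigr)\,d\theta\,dt\le C\varepsilon^2$, which is the energy identity. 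In short, the three-circle/discrete-maximum-principle ingredient you place at the end must come \emph{before} Pohozaev and must be applied not to $F(j)$ but to $u_i-u_i^*$; Pohozaev then converts tangential decay into radial control, not the other way around.
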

The proof of Theorem \ref{thm:ei} is completely contained in the proof of Theorem \ref{thm:main}. Our second byproduct is a new proof of the removable singularity. In the theory of harmonic maps, it was first proved by Sacks and Uhlenbeck \cite{SU}. Then it became a special case of H\'elein's regularity theorem \cite{Helein} for weak harmonic maps of two dimensions. It is H\'elein's idea that was generalized to the case of biharmonic maps and it was shown in \cite{Moser} that weak intrinsic Hessian biharmonic map in $W^{2,2}$ is smooth and the case of extrinsic biharmonic maps and intrinsic Laplace biharmonic maps is proved in \cite{W04}. Hence, the removable singularity theorem of biharmonic maps follows as a corollary.

In Section \ref{sec:rem}, we prove the following removable singularity theorem by using an argument similar to the proof of Sacks and Uhlenbeck in \cite{SU}.
\begin{thm}
	Let $u$ be a smooth biharmonic map on $B_1\setminus \set{0}$. If
	\begin{equation*}
		\int_{B_1} \abs{\nabla^2 u}^2 +\abs{\nabla u}^4 dx< +\infty,
	\end{equation*}
	then $u$ can be extended to a smooth biharmonic map on $B_1$.
\end{thm}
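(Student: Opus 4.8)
The plan is to exploit the no-neck theorem (Theorem \ref{thm:main}) together with the $\varepsilon$-regularity lemma (Theorem \ref{thm:regularity}) via the classical Sacks--Uhlenbeck rescaling trick. Fix a smooth biharmonic map $u$ on $B_1\setminus\set{0}$ with finite total energy $\int_{B_1}\abs{\nabla^2 u}^2+\abs{\nabla u}^4\,dx<\infty$. First I would show that the singularity is \emph{removable in the weak sense}, i.e.\ $u$ extends to a map in $W^{2,2}(B_1,N)$: this follows because the pointwise singularity at the origin has zero $2$-capacity in dimension four (a point has vanishing $W^{2,2}$-capacity when $2\cdot 2 = 4$), so the finite-energy smooth map on the punctured ball already lies in $W^{2,2}(B_1,N)$ and satisfies the biharmonic map equation weakly across $0$. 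At this point the regularity theory cited in the excerpt (\cite{Moser} for the intrinsic Hessian case, \cite{W04} for the extrinsic and intrinsic Laplace cases) would immediately finish the proof; but the point of Section \ref{sec:rem} is to give a \emph{self-contained} argument in the spirit of \cite{SU}, so I would instead proceed as follows.

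The core step is a direct \emph{decay/oscillation estimate} near the origin that does not invoke the weak regularity theorem. Consider the rescaled maps $u_r(x) := u(rx)$ on the annulus $B_2\setminus B_{1/2}$ for $r\to 0$. Because the total energy is finite, $\int_{B_r\setminus B_{r/2}}\abs{\nabla^2 u}^2+\abs{\nabla u}^4\,dx\to 0$ as $r\to 0$, so for $r$ small the rescaled maps have arbitrarily small energy on fixed annuli. I would then argue by contradiction: if the oscillation of $u$ on $B_r\setminus B_{r/2}$ does not tend to zero, one can extract, after rescaling, a sequence $u_{r_i}$ behaving like the neck region of a blow-up with $\lambda_i/r_i\to 0$ and weak limit a constant (since the energy on every fixed annulus vanishes), and Theorem \ref{thm:main} (or really the quantitative no-neck estimate established in its proof, of the form $\mbox{osc}_{B_\delta\setminus B_{\lambda_i R}}u_i \le C\big(\text{energy in the neck}\big)^{1/2} + o(1)$) forces the oscillation to be controlled by the vanishing neck energy — a contradiction. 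Consequently $u$ extends continuously to $0$ with $\lim_{x\to 0}u(x)=:p\in N$. Combined with the finite $W^{2,2}$ energy and the small-energy hypothesis of Theorem \ref{thm:regularity}, we get that on a small ball $B_{r_0}$ the map $u$ (now known to be continuous, hence in $W^{2,2}$ across $0$ and solving the equation weakly) satisfies the hypotheses of the $\varepsilon$-regularity lemma, which upgrades $u$ to a smooth biharmonic map on $B_{r_0}$, and hence on all of $B_1$.

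The main obstacle I expect is making the contradiction argument genuinely self-contained, i.e.\ isolating from the (long) proof of Theorem \ref{thm:main} exactly the quantitative ingredient needed here — essentially a gradient/oscillation estimate on a single neck annulus in terms of the energy it carries, uniform in the conformal modulus — without circularly reinvoking the full blow-up package. A secondary technical point is handling the fourth-order nature of the equation: the Pohozaev-type identity and the Hardy-type inequalities on annuli that control $\int \abs{\nabla u}^4$ and $\int\abs{\nabla^2 u}^2$ separately behave differently than in the second-order harmonic map case, so the interior estimates for $\triangle^2$ (or the corresponding system) on dyadic annuli must be assembled carefully, summing the geometric series of energies over the dyadic decomposition of $B_{r_0}\setminus\set{0}$ to conclude. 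Once the single-annulus estimate and this dyadic summation are in place, the removable singularity statement follows exactly as in \cite{SU}.
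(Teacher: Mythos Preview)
Your proposal identifies the right toolbox but packages it in a way that does not quite close, and it differs substantially from what the paper actually does.

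\textbf{The gap.} Your contradiction argument is not well-posed. First, the oscillation of $u$ on a single dyadic annulus $B_r\setminus B_{r/2}$ \emph{does} tend to zero as $r\to 0$: this is immediate from $\varepsilon$-regularity once the energy on that annulus is small. What you need is that the \emph{infinite sum} of these oscillations converges, i.e.\ that $\mathrm{osc}_{B_\delta\setminus\{0\}}u\to 0$, and a contradiction hypothesis phrased on single annuli gives you nothing to contradict. Second, even with the right hypothesis, you cannot feed the rescalings $u_{r_i}$ into Theorem~\ref{thm:main}: that theorem assumes a sequence of biharmonic maps smooth on all of $B_1$ with a genuine bubble forming, whereas your $u_{r_i}$ are still singular at $0$ and carry no concentrating energy (hence no bubble). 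The Pohozaev step in Section~\ref{sec:rad} explicitly uses smoothness at the origin to kill the boundary term at $t=-\infty$, so invoking Theorem~\ref{thm:main} as a black box here is circular. You anticipate this (``the main obstacle\ldots''), but the resolution is not a minor extraction---it is the whole proof.

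\textbf{What the paper does instead.} Section~\ref{sec:rem} bypasses rescaling and contradiction entirely and runs the three-circle/Pohozaev machinery \emph{directly} on the single map $u$ over the half-infinite cylinder $(-\infty,0]\times S^3$. After shrinking so the total energy is below $\varepsilon^4$, one shows $w=u-u^*$ is $\eta_0$-approximately biharmonic and then argues that $F_l\ge e^L F_{l+1}$ for all $l$: if this failed at some $l_0$, part (b) of Theorem~\ref{thm:tcapprox} would force $F_l$ to grow exponentially as $l\to\infty$, contradicting the finite-energy bound $F_l\le C\varepsilon^2$. This yields one-sided exponential decay $|\partial_t^p\tilde\nabla_{S^3}^q u|\le C\varepsilon e^{t/2}$. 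The Pohozaev ODE of Section~\ref{sec:rad} is then re-derived, with the vanishing of the boundary term at $t=-\infty$ justified not by smoothness but by the $o(1)$ decay of $\rho^k|\nabla^k u|$ coming from $\varepsilon$-regularity and the vanishing tail of the energy (Remark~\ref{rem:zero}). This gives $|x||\nabla u|\le C\varepsilon|x|^{1/2}$, hence H\"older continuity; higher regularity is then obtained by citing the bootstrap in \cite{CWY}, not Theorem~\ref{thm:regularity} (which, as stated, already presupposes $u\in W^{4,p}$).
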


Our proof for all three types of biharmonic maps are similar. Hence, we shall present only the complete proof for extrinsic biharmonic maps and show in Section \ref{sec:intrinsic} that why the proof works for other cases as well.

The proof contains two ingredients. The first is a generalization of Qing and Tian's proof of no neck result for harmonic maps \cite{QT}. Precisely, we prove a three circle lemma for biharmonic functions and then show that the lemma holds for some approximate biharmonic functions. For $u_i$ in the neck region,
\begin{equation*}
	u_i(x) -\frac{1}{\abs{\partial B_r}}\int_{\partial B_r} u_i(x) d\sigma
\end{equation*}
will be shown to be approximate biharmonic function in the above sense. Hence, we can argue as in Qing and Tian to see that the tangential derivatives of $u_i$ satisfy some decay estimate.

Next important idea for the proof is a Pohozaev type argument. In the case of harmonic maps, it was first introduced to the study of neck analysis by Lin and Wang \cite{LW} and it says the tangential part of the energy is the same as the radial part. The computation is generalized to biharmonic maps in \cite{HM}, \cite{WZ12} and \cite{LR}. Because the biharmonic maps satisfy a fourth order PDE, boundary terms arise in the computation and the authors of \cite{HM} and \cite{WZ12} managed to show that the boundary terms are small so that they can still compare the tangential energy and the radial energy. In this paper, we make use of this piece of information in a different way. We derive an ordinary differential inequality for the radial part of energy. Thanks to the decay of tangential energy, we can prove that the radial energy decays in a similar way in the neck.

The rest of the paper is organized as follows. In Section \ref{sec:pre}, we set up the notations and recall the $\varepsilon-$regularity theorem and the removable singularity theorem. Moreover, we show that $\lim_{\abs{x}\to \infty} \omega$ exists for a biharmonic map with finite energy from $\Real^4$ to $N$. In Section \ref{sec:bif}, we show the three circle lemma, which is used in Section \ref{sec:tan} to show the exponential decay of tangential energy. The proof of the main theorem is completed in Section \ref{sec:rad} by showing the decay of radial energy. In Section \ref{sec:rem}, we show how to use the method of three circle lemma to give an elementary proof of the removable singularity theorem. In the last section, we indicate why the proofs of this paper work for intrinsic biharmonic maps as well.

\section{Preliminaries}\label{sec:pre}

In this section, we recall some basic results about biharmonic maps in dimension four. For simplicity, we assume that $u$ is a map from $\Omega\subset \Real^4$ into some closed Riemannian manifold $N$ and $N$ is isometrically embedded in $\Real^K$.

An extrinsic biharmonic map is a critical point of $H(u)$, hence it satisfies the Euler-Lagrange equation
\begin{equation}\label{eqn:ELex}
	\triangle^2 u= \triangle( B(u)(\nabla u, \nabla u)) +2 \nabla \cdot \langle \triangle u, \nabla(P(u))\rangle -\langle \triangle (P(u)),\triangle u \rangle.
\end{equation}
Here $P(y):\Real^K\to T_y N$ is the orthogonal projection from $\Real^K$ to the tangent space $T_y N$ and for $X,Y\in T_y N$, $B(y)(X,Y)=-\nabla_X P(y)(Y)$ is the second fundamental form of $N$ as a submanifold in $\Real^K$.
Since we consider smooth biharmonic maps only, it was proved in \cite{W04} that the above equation is equivalent to
\begin{equation*}
\triangle^2 u\perp T_u N.	
\end{equation*}

For intrinsic biharmonic maps, the equations are more complicated. We postpone their discussion to Section \ref{sec:intrinsic}. It suffices to note here that they are scaling invariant.

To study the blow-up, we need

(1) an $\varepsilon-$regularity estimate;

(2) a removable of singularity theorem;

(3) a uniform lower bound on the energy of bubbles.

These results are by now very standard and one can find proofs in Lemma 2.5, Lemma 2.6 and Lemma 2.8 in \cite{HM}, Lemma 5.3 in \cite{S} and Theorem A in \cite{W04}. For completeness, we list them below in the form we need in later sections.

\begin{thm}\label{thm:regularity}($\varepsilon_0-$regularity)
Let $u\in W^{4,p}(B_1)$,$p>1$, be a biharmonic map. There exists $\epsilon_0>0$ such that if $\int_{B_1}|\nabla^2u|^2+|\nabla u|^4dx\leq\epsilon_0$ then
\begin{eqnarray*}
\|u-\overline{u}\|_{W^{4,p}(B_{1/2})}\leq C(\|\nabla^2u\|_{L^2(B_1)}+\|\nabla u\|_{L^4(B_1)}),
\end{eqnarray*}
where $\overline{u}$ is the mean value of $u$ over the unit ball.
\end{thm}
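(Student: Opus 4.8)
The plan is to establish a decay estimate on dyadic balls and then bootstrap; since $u\in W^{4,p}(B_1)$ is already regular enough to justify the integrations by parts below, only the stated a priori bound is at issue. Write the Euler--Lagrange equation \eqref{eqn:ELex} as $\triangle^2 u=f$ with $f=\triangle(B(u)(\nabla u,\nabla u))+2\nabla\cdot\langle\triangle u,\nabla(P(u))\rangle-\langle\triangle(P(u)),\triangle u\rangle$. In dimension four $\nabla^2 u\in L^2$ and $\nabla u\in L^4$ are exactly the critical quantities for this fourth-order system, and the scale-invariant quantity to monitor is $\Phi(x_0,r):=\int_{B_r(x_0)}|\nabla^2 u|^2+|\nabla u|^4\,dx$.

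First I would prove a decay estimate: there are $\theta\in(0,\tfrac12)$ and $\varepsilon_0>0$ so that $\Phi(x_0,r)\le\varepsilon_0$ and $B_r(x_0)\subset B_1$ force $\Phi(x_0,\theta r)\le\tfrac12\Phi(x_0,r)$. On $B_r=B_r(x_0)$, decompose $u=v+w$, where $v$ is biharmonic with $v=u$ and $\partial_\nu v=\partial_\nu u$ on $\partial B_r$, and $w$ solves $\triangle^2 w=f$ with $w=\partial_\nu w=0$ on $\partial B_r$. Because $\nabla v$ and $\nabla^2 v$ are again biharmonic, interior estimates for $\triangle^2$ give the clean decay $\int_{B_{\theta r}}(|\nabla^2 v|^2+|\nabla v|^4)\le C\theta^4\,\Phi(x_0,r)$. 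For $w$ I would use the Green representation of the bi-Laplacian on the ball: since the fundamental solution of $\triangle^2$ in $\R^4$ is logarithmic, $\nabla^2 w$ is pointwise dominated by the Riesz potential $I_2(|f|)$ and $\nabla w$ by $I_3(|f|)$, up to boundary contributions. Splitting $f$ into a Laplacian of the $L^2$ function $B(u)(\nabla u,\nabla u)$, a divergence of the $L^{4/3}$ product $\triangle u\cdot\nabla(P(u))$, and the zeroth-order product $\triangle(P(u))\cdot\triangle u$, and integrating by parts against the kernel to move derivatives off $f$, one obtains $\int_{B_r}(|\nabla^2 w|^2+|\nabla w|^4)\le C\,\Phi(x_0,r)^{1+\sigma}\le C\varepsilon_0^{\sigma}\,\Phi(x_0,r)$ for some $\sigma>0$. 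Adding the two contributions and then choosing $\theta$ small, followed by $\varepsilon_0=\varepsilon_0(\theta)$ small, yields the halving.

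Iterating this over dyadic radii gives $\Phi(x_0,\rho)\le C(\rho/r)^{2\alpha}\Phi(x_0,r)$ for some $\alpha\in(0,1)$ and all $B_r(x_0)\subset B_{3/4}$, i.e. $\nabla^2 u$ and $|\nabla u|^2$ lie in a Morrey space strictly better than the critical $L^2$-Morrey space on $B_{3/4}$; this forces $u\in C^{0,\alpha}(B_{3/4})$ with the corresponding quantitative estimate. Feeding the Morrey bounds back into $\triangle^2 u=f$, the right-hand side now sits in a subcritical Morrey space, and the Adams estimates for Riesz potentials on Morrey spaces (or, using the Hölder continuity, the classical $L^p$ theory for $\triangle^2$) upgrade the estimate in finitely many steps to $u\in W^{4,p}(B_{1/2})$ with $\|u-\bar u\|_{W^{4,p}(B_{1/2})}\le C(\|\nabla^2 u\|_{L^2(B_1)}+\|\nabla u\|_{L^4(B_1)})$; it is convenient to run this last bootstrap on a shrinking family of balls between $B_{3/4}$ and $B_{1/2}$ and to absorb lower-order terms by interpolation.

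The main obstacle is the borderline zeroth-order term $\langle\triangle(P(u)),\triangle u\rangle$: schematically it is a product of two $L^2$ functions, hence only $L^1$, and $I_2$ of an $L^1$ function lies merely in weak-$L^2$, which is not enough to close the halving. This is where the structure of the nonlinearity must be exploited: either one writes $\triangle(P(u))=\nabla\cdot(\nabla(P(u)))$ so that the term becomes a divergence and one derivative can be integrated by parts onto the Green kernel (which gains integrability), or one works throughout in Lorentz/Morrey spaces in which $I_2$ does map the relevant space into $L^2$. Bookkeeping of which terms of $f$ are genuine divergences, and of the boundary terms produced when the radius is varied, is the technical heart of the argument; everything else reduces to Hölder's and Sobolev's inequalities.
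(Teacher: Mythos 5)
Your route (biharmonic replacement on balls, Riesz-potential estimates for the inhomogeneous part, Morrey decay, then bootstrap) is the classical scheme used for \emph{weak} solutions, and it is genuinely different from what the paper does; but as written it has a gap exactly at the point you flag, and neither of your two proposed fixes closes it. After rewriting the nonlinearity in divergence form as far as possible, what remains are the critical $L^1$ products $\nabla^2u\#\nabla^2u$, $\nabla^2u\#\nabla u\#\nabla u$ and $\nabla u\#\nabla u\#\nabla u\#\nabla u$: the Riesz potential $I_2$ maps $L^1(\R^4)$ only into weak $L^2$, so the claimed bound $\int_{B_r}|\nabla^2w|^2+|\nabla w|^4\le C\,\Phi(x_0,r)^{1+\sigma}$ cannot be obtained this way, and the halving step does not close. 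Your first remedy, writing $\langle\triangle(P(u)),\triangle u\rangle=\nabla\cdot\langle\nabla(P(u)),\triangle u\rangle-\langle\nabla(P(u)),\nabla\triangle u\rangle$, merely trades the term for $\nabla u\#\nabla^3u$, and integrating by parts once more brings back $\nabla^2u\#\nabla^2u$; moreover $\nabla^3u$ is not controlled by the energy with a universal constant, so it cannot be left on the right-hand side. Your second remedy (Lorentz/Morrey spaces) is circular at this stage: the improved mapping properties of $I_2$ require precisely the Morrey decay that the iteration is supposed to produce. Making this scheme work needs an extra structural input (Hardy-space/div-curl estimates, a conservation law, or a gauge construction as in Wang and Lamm--Rivi\`ere), none of which you supply.

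The paper avoids all of this by using the hypothesis $u\in W^{4,p}$ \emph{quantitatively} inside the estimate, not only to justify computations. One multiplies by a cutoff $\varphi$, applies the $L^p$ theory for $\triangle^2(\varphi u)$ with $1<p<4/3$, and estimates each critical term as a product of a small factor ($\|\nabla u\|_{L^4(B_1)}$ or $\|\nabla^2u\|_{L^2(B_1)}$, small by hypothesis) with a factor such as $\|\nabla^3(\varphi u)\|_{L^{4p/(4-p)}}$ that is controlled by $\|\nabla^4(\varphi u)\|_{L^p}$ through Sobolev embedding; for $\epsilon_0$ small these terms are absorbed into the left-hand side. An interpolation argument on the seminorms $(1-\sigma)^j\|\nabla^ju\|_{L^p(B_\sigma)}$ then gives the $W^{4,p}$ bound for $p=16/13$, and one bootstraps in $p$ via Sobolev embedding and interior $L^p$ estimates to reach general $p$. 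If you want to keep your decomposition scheme you must import one of the harder tools above; otherwise the direct absorption argument is both shorter and sufficient for the a priori estimate actually being claimed.
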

\begin{proof}
The proof is in Appendix.
\end{proof}

\begin{thm}(removable singularity)\label{thm:remove}
	Let $u$ be a biharmonic map defined on $B_1\setminus \set{0}$ into $N$.	Assume that
	\begin{equation*}
		\int_{B_1\setminus \set{0}} \abs{\nabla^2 u}^2 +\abs{\nabla u}^4 dx \leq C,
	\end{equation*}
	then $u$ can be extended smoothly to be a biharmonic map from $B_1$ to $N$.
\end{thm}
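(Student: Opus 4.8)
The plan is to use the $\epsilon_0$-regularity of Theorem~\ref{thm:regularity} together with the three circle lemma developed in Section~\ref{sec:bif}, following the strategy of Sacks and Uhlenbeck \cite{SU}: first show that $u$ extends continuously across $0$, and then remove the singularity by a cutoff argument and bootstrap to smoothness.

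The first step is localization. Since $\int_{B_1}\abs{\nabla^2 u}^2+\abs{\nabla u}^4\,dx<\infty$, there is $r_0\in(0,1)$ with $\int_{B_{r_0}}\abs{\nabla^2 u}^2+\abs{\nabla u}^4\,dx<\epsilon_0$, and, writing $\epsilon(r):=\int_{B_{2r}\setminus B_{r/2}}\abs{\nabla^2 u}^2+\abs{\nabla u}^4\,dx$, absolute continuity of the integral gives $\epsilon(r)\to0$ as $r\to0$. For $\abs{x}=r<r_0/2$ I would rescale $u$ from the ball $B_{r/4}(x)\subset B_1\setminus\set{0}$ to $B_1$ — still biharmonic, with energy at most $\epsilon(r)$ — and apply Theorem~\ref{thm:regularity}; since the estimate is scale invariant and $W^{4,p}(B_{1/2})\hookrightarrow C^3(B_{1/2})$ for large $p$, this yields
\begin{equation*}
	\abs{\nabla u}(x)\le C\frac{\sqrt{\epsilon(r)}}{r},\qquad \abs{\nabla^2 u}(x)\le C\frac{\sqrt{\epsilon(r)}}{r^{2}},
\end{equation*}
and the analogous bound for $\nabla^3 u$. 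In particular $\nabla^2u\in L^2(B_1)$ and $\nabla u\in L^4(B_1)$, and since $N$ is compact $u$ is bounded, so $u\in W^{2,2}(B_1,N)$.

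The heart of the proof is to upgrade these bounds, which only give $\mbox{osc}_{B_{2r}\setminus B_{r/2}}u\lesssim\sqrt{\epsilon(r)}$ — not summable over dyadic scales — to a genuine decay. Here I would invoke the three circle lemma: using the pointwise bounds above and the Euler--Lagrange equation \eqref{eqn:ELex}, the function
\begin{equation*}
	v(x):=u(x)-\frac{1}{\abs{\partial B_{\abs{x}}}}\int_{\partial B_{\abs{x}}}u\,d\sigma
\end{equation*}
is an \emph{approximate biharmonic function} on the infinite neck $B_{r_0}\setminus\set{0}$ in the sense of Section~\ref{sec:bif}. The three circle lemma then forces the tangential part of the energy, and hence $\mbox{osc}_{\partial B_r}u$, to decay like a power of $r$; combined with the decay of the radial part of the energy (the Pohozaev-type ordinary differential inequality of Section~\ref{sec:rad}) this gives $\mbox{osc}_{B_r\setminus\set{0}}u\to0$ as $r\to0$, so $u$ extends to a H\"older continuous map on $B_1$ with values in $N$. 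This is the step I expect to be the main obstacle: checking that $v$ genuinely qualifies as an approximate biharmonic function, and controlling the radial energy, are where the fourth order of the equation and the absence of conformal invariance make the analysis harder than for harmonic maps, and they are exactly what the machinery of Sections~\ref{sec:bif}--\ref{sec:rad} is built to handle.

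Finally I would remove the singularity and conclude smoothness. Testing \eqref{eqn:ELex} on $B_1\setminus\set{0}$ against $\eta_\delta\phi$, where $\phi\in C_c^\infty(B_1)$ and $\eta_\delta$ is a cutoff vanishing on $B_\delta$, equal to $1$ off $B_{2\delta}$, with $\abs{\nabla^k\eta_\delta}\lesssim\delta^{-k}$, and letting $\delta\to0$: every term involving a derivative of $\eta_\delta$ is supported on $B_{2\delta}\setminus B_\delta$ and, after Cauchy--Schwarz and $\abs{B_{2\delta}}^{1/2}\sim\delta^{2}$, is bounded by a constant times $\norm{\nabla^2u}_{L^2(B_{2\delta})}+\norm{\nabla u}_{L^4(B_{2\delta})}^2$, hence tends to $0$; the remaining terms converge to the natural ones on $B_1$ since their integrands lie in $L^1(B_1)$. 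Thus $u$ is a weak biharmonic map on all of $B_1$. Since $u$ is now continuous across $0$ with small oscillation (and small Morrey norm from the second step), a standard elliptic bootstrap for the fourth order system upgrades $u$ to a smooth biharmonic map on $B_1$; alternatively one may cite the interior regularity theorem of \cite{W04} at this point.
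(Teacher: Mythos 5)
Your proposal is correct and follows essentially the same route as the paper's own self-contained proof in Section \ref{sec:rem} (Theorem \ref{l1}): shrink the ball so that $\varepsilon_0$-regularity gives $|x|^k\abs{\nabla^k u}=o(1)$, show that $u-u^*$ is an $\eta_0$-approximate biharmonic function, apply the three circle lemma on the one-sided infinite neck for tangential decay, run the Pohozaev-type ODE for radial decay to obtain H\"older continuity, and then invoke known regularity theory; your cutoff argument showing $u$ is weakly biharmonic across the origin is precisely the step the paper defers to Lemma 2.5 of \cite{HM}. The only detail worth making explicit is that on a one-sided neck the alternative (c) of Theorem \ref{thm:tcapprox} must be combined with the uniform bound $F_l\le C\varepsilon^2$ coming from the Poincar\'e inequality on each annulus, in order to exclude the geometrically growing branch and conclude $F_l\ge e^{L}F_{l+1}$, as the paper does.
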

It follows from Theorem A of \cite{W04} and Theorem 6.1 and Theorem 6.2 of \cite{Moser}. An argument like Lemma 2.5 in \cite{HM} is needed to show that $u$ is a weak biharmonic map to use the above mentioned theorems.

\begin{thm}
	(energy gap)\label{thm:gap}
	There is some $\varepsilon>0$ depending only on $N$ such that if $u$ is a biharmonic map from $\Real^4$ to $N$ with
	\begin{equation*}
		\int_{\Real^4} \abs{\triangle u}^2 +\abs{\nabla u}^4 dv \leq \varepsilon.
	\end{equation*}
	then $u$ is a constant map.
\end{thm}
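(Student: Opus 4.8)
The plan is to run the standard $\varepsilon$-regularity-plus-scaling argument, exactly as in the classical Liouville theorem for harmonic maps from $\Real^2$. Fix an arbitrary point $x_0\in\Real^4$ and a radius $R>0$, and consider the rescaled map $u_R(x):=u(x_0+Rx)$, which is again a smooth biharmonic map on $B_2$ because the equation is scale invariant. The key observation is that the hypothesis $\int_{\Real^4}\abs{\triangle u}^2+\abs{\nabla u}^4\,dv\le\varepsilon$ forces \emph{every} ball $B_\rho(y)\subset\Real^4$ to carry energy at most $\varepsilon$, no matter how large $\rho$ is; hence, after rescaling, $u_R$ will have small energy on $B_2$ uniformly in $R$.

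To invoke Theorem \ref{thm:regularity} I first need to produce the quantity appearing there, $\norm{\nabla^2 u_R}_{L^2}^2+\norm{\nabla u_R}_{L^4}^4$, out of the hypothesis, which only controls $\triangle u_R$. The quartic term is scale invariant, $\int_{B_1}\abs{\nabla u_R}^4\,dx=\int_{B_R(x_0)}\abs{\nabla u}^4\,dx\le\varepsilon$. For the Hessian term I would use the interior $L^2$ estimate for $\triangle$ together with Poincar\'e's inequality, in the form
\begin{equation*}
	\norm{\nabla^2 u_R}_{L^2(B_1)}\le C\big(\norm{\triangle u_R}_{L^2(B_2)}+\norm{\nabla u_R}_{L^2(B_2)}\big).
\end{equation*}
Here $\norm{\triangle u_R}_{L^2(B_2)}^2=\int_{B_{2R}(x_0)}\abs{\triangle u}^2\le\varepsilon$, and --- this is the point where the critical dimension four enters --- $\norm{\nabla u_R}_{L^2(B_2)}^2=R^{-2}\int_{B_{2R}(x_0)}\abs{\nabla u}^2\le R^{-2}\abs{B_{2R}}^{1/2}\norm{\nabla u}_{L^4(\Real^4)}^2\le C\varepsilon^{1/2}$, the factor $R^{-2}$ being cancelled exactly by the volume growth of $B_{2R}$. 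Choosing $\varepsilon$ small (depending on the $\varepsilon_0$ of Theorem \ref{thm:regularity} and on $N$) we then have $\int_{B_1}\abs{\nabla^2 u_R}^2+\abs{\nabla u_R}^4\,dx\le\varepsilon_0$.

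Now Theorem \ref{thm:regularity} applied to $u_R$, together with the Sobolev embedding $W^{4,p}(B_{1/2})\hookrightarrow C^1(\overline{B_{1/2}})$ valid for $p>4/3$ (and one is free to take such a $p$ in Theorem \ref{thm:regularity}), gives
\begin{equation*}
	\abs{\nabla u_R(0)}\le C\norm{u_R-\overline{u_R}}_{W^{4,p}(B_{1/2})}\le C\big(\norm{\nabla^2 u_R}_{L^2(B_1)}+\norm{\nabla u_R}_{L^4(B_1)}\big)\le C\varepsilon^{1/4}.
\end{equation*}
Undoing the scaling, $\abs{\nabla u_R(0)}=R\abs{\nabla u(x_0)}$, so $\abs{\nabla u(x_0)}\le C\varepsilon^{1/4}R^{-1}$, and letting $R\to\infty$ forces $\nabla u(x_0)=0$. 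Since $x_0$ was arbitrary, $\nabla u\equiv0$ and $u$ is constant.

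I do not expect a serious obstacle: the argument is routine once the hypotheses are matched. The only point that really needs care is the bridge between $\triangle u$ and $\nabla^2 u$ in the energy, and the observation that the factor coming from the interior elliptic estimate on a huge ball is harmless precisely because we are in dimension four --- in a non-critical dimension $\norm{\nabla u_R}_{L^2(B_2)}$ would grow with $R$ and the scheme would break down. One should also check that the admissible exponent $p$ in Theorem \ref{thm:regularity} can be taken larger than $4/3$ and that all constants are independent of $x_0$ and $R$, which is automatic once everything is normalised to the unit ball.
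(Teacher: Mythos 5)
Your argument is correct and complete: the scale invariance of the equation, the passage from $\norm{\triangle u_R}_{L^2}$ plus the H\"older/Poincar\'e-controlled $\norm{\nabla u_R}_{L^2}$ to $\norm{\nabla^2 u_R}_{L^2(B_1)}$, and the pointwise gradient bound $\abs{\nabla u(x_0)}\le C\varepsilon^{1/4}R^{-1}$ from Theorem \ref{thm:regularity} and Sobolev embedding all check out, including the observation that the exponent in Theorem \ref{thm:regularity} can be bootstrapped above $4/3$. The paper itself offers no proof but simply cites Lemma 2.8 of \cite{HM}, and your rescaling-plus-$\varepsilon$-regularity argument is exactly the standard proof behind that citation, so this is essentially the same approach.
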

This is essentially Lemma 2.8 of \cite{HM} and again the proof works for all three types of biharmonic maps.

To conclude this section, we show how Theorem \ref{thm:main} implies that $\lim_{\abs{x}\to \infty} \omega$ exists, as promised in the introduction.

It suffices to show that for any $\varepsilon>0$, we can find $R$ large such that for any $R'>R$, we have
\begin{equation*}
	osc_{B_{R'}\setminus B_R} \omega <\varepsilon.
\end{equation*}
Since $u_i(\lambda_i x)$ converges strongly to $\omega$ on $B_{R'}\setminus B_R$, we need to show that
\begin{equation*}
	osc_{B_{\lambda_i R'}\setminus B_{\lambda_i R}} u_i <\varepsilon/2.
\end{equation*}
This is a consequence of (\ref{eqn:noneck}) because when $i$ is large $B_{\lambda_i R'}$ is contained in $B_\delta$ for any $\delta>0$.

\section{three circle lemma}\label{sec:bif}
This section consists of two parts. In the first part, we show the three circle lemma for biharmonic functions defined on $B_{r_2}\setminus B_{r_1}\subset \Real^4$. In the second part, we generalize this to some approximate biharmonic functions.

\subsection{biharmonic functions}
Let $f$ be a biharmonic function defined on $B_{r_2}\setminus B_{r_1}\subset \Real^4$. Let $\varphi_n^l (l=1,\cdots,h_n,n=1,2,3,\cdots)$ be the eigen-functions of $S^3$ (excluding constant functions), i.e.
\begin{equation*}
	\triangle_{S^3} \varphi_n^l= -n(n+2) \varphi_n^l.
\end{equation*}
Moreover, we assume that $\set{\varphi_n^l}$ are normalized so that they form an orthonormal basis of $L^2(S^3)$. If we denote the coordinates of $S^3$ by $\theta$, then $\varphi_n^l$ is a function of $\theta$ and $f$ is a function of $r$ and $\theta$, where $r=\abs{x}$.

By separation of variables, we can write
	\begin{eqnarray*}
		f&=&  A_0 +B_0 r^2 + C_0 r^{-2} +D_0 \log r\\
		&&+ \sum_{n=1}^\infty \sum_{l=1}^{h_n} \left( A^l_n r^n + B^l_n r^{-(n+2)} +C^l_n r^{n+2} +D^l_n r^{-n} \right)\varphi_n^l.
	\end{eqnarray*}

For some $L>0$ to be determined later and $i\in \mathbb Z^+$, set
\begin{equation*}
	A_i= B_{e^{-(i-1) L}}\setminus B_{e^{-iL}}
\end{equation*}
and
\begin{equation*}
	F_i(f)=\int_{A_i} \frac{1}{\abs{x}^4} f^2 dx.
\end{equation*}

We will prove the following three circle lemma for biharmonic functions.
\begin{thm}\label{thm:tcfunction}
	There is some universal constant $L>0$. If $f$ is a nonzero biharmonic function defined on $A_{i-1}\cup A_i\cup A_{i+1}$ satisfying
	\begin{equation*}
		\int_{S^3} f(r,\theta) d\theta=0
	\end{equation*}
	for all $r \in [e^{-(i+1)L},e^{-(i-1)L}]$, then
	\begin{equation*}
		2F_i(f)< e^{-L} (F_{i-1}(f)+F_{i+1}(f)).
	\end{equation*}
\end{thm}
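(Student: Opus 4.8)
The plan is to exploit the explicit expansion of $f$ in spherical harmonics, which diagonalizes $F_i(f)$ since the $\varphi_n^l$ are $L^2(S^3)$-orthonormal. Because $\int_{S^3} f(r,\theta)\,d\theta=0$ for all relevant $r$, the zero mode ($A_0+B_0r^2+C_0r^{-2}+D_0\log r$) vanishes identically, so $f=\sum_{n\ge1}\sum_l (A_n^l r^n+B_n^l r^{-(n+2)}+C_n^l r^{n+2}+D_n^l r^{-n})\varphi_n^l$. Writing $F_i(f)=\sum_{n,l} F_i(f_n^l)$ where $f_n^l$ is the single-mode piece, it suffices to prove the three-circle inequality $2F_i(g)\le e^{-L}(F_{i-1}(g)+F_{i+1}(g))$ for each fixed mode $g=(a r^n+b r^{-(n+2)}+c r^{n+2}+d r^{-n})\varphi_n^l$, and to do so with a constant $L$ that works uniformly in $n\ge1$; summing over $n,l$ and noting strict inequality holds on at least one nonzero mode then gives the strict inequality for $f$.

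For a fixed mode, $F_i(g)$ reduces (after the substitution $r=e^{-t}$, $dx/|x|^4 = dt\,d\theta$ over the annulus) to $\int_{(i-1)L}^{iL} |a e^{-nt}+b e^{(n+2)t}+c e^{-(n+2)t}+d e^{nt}|^2\,dt$. So the claim becomes a statement about the quadratic form $\Phi(I)=\int_I |\sum_{k} c_k e^{\mu_k t}|^2 dt$ on consecutive intervals $I$ of length $L$, where the exponents are $\mu\in\{-n,-(n+2),n+2,n\}$. The key point is that each basis function $e^{\mu t}$ is log-convex, hence $\Phi$ restricted to any one-dimensional family is a convexity-type functional; more precisely I would show directly that the $4\times4$ Gram-type matrix controlling $F_{i-1}+F_{i+1}-2e^{L}F_i$ is positive (semi)definite for $L$ large. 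A cleaner route: for each pair of exponents $\mu_k,\mu_j$, the function $t\mapsto e^{(\mu_k+\mu_j)t}$ is convex, so $\int_{I_{i-1}} + \int_{I_{i+1}} \ge 2\int_{I_i}$ term by term for the "diagonal-in-$t$" contributions, and the exponential factors $e^{\pm(\mu_k+\mu_j)L}$ coming from the shift by $L$ beat the factor $2$ once $L$ is chosen so that $e^{(\mu_k+\mu_j)L}+e^{-(\mu_k+\mu_j)L}\ge 2e^{L}$ — but since $\mu_k+\mu_j$ can be as small as $-2$ (e.g. $\mu_k=-n$, $\mu_j=-(n+2)$ gives $-(2n+2)\le-4$, while the genuinely small cases are $\mu_k=-n,\mu_j=n$ giving $0$), one must handle the near-constant combinations carefully. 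The honest computation is to write $g$ on each annulus in terms of the four coefficients, compute the three quadratic forms $F_{i-1},F_i,F_{i+1}$ explicitly as $4\times4$ symmetric matrices with entries that are elementary functions of $n$ and $L$, and verify $F_{i-1}+F_{i+1}-2e^L F_i \succ 0$; the off-diagonal entries decay like $e^{-2nL}$ or oscillate, and the diagonal entries dominate once $L$ exceeds some absolute threshold, uniformly in $n\ge1$ because larger $n$ only makes the separation of exponents better.

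I expect the main obstacle to be precisely the uniformity in $n$ together with the marginal modes where two exponents nearly cancel (the pairs $r^n\cdot r^{-n}$ and $r^{n+2}\cdot r^{-(n+2)}$, whose products are constant in $r$): these are the terms for which the naive convexity gives only the factor $2$ with no exponential gain, so one needs the genuinely log-convex (strictly, for $n\ge1$) behavior of the remaining cross terms and a careful bookkeeping of signs to absorb them. Once the single-mode inequality is established with an absolute $L$, assembling the full statement is immediate: $2F_i(f)=\sum 2F_i(f_n^l) \le e^{-L}\sum (F_{i-1}(f_n^l)+F_{i+1}(f_n^l)) = e^{-L}(F_{i-1}(f)+F_{i+1}(f))$, and strictness follows because $f\not\equiv0$ forces some mode to be nonzero and the inequality is strict on that mode (as long as $L$ is chosen strictly above the threshold).
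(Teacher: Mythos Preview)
Your plan is essentially the same as the paper's: reduce to a single spherical-harmonic mode via orthonormality, note that the zero mode is killed by the hypothesis, and reduce the statement to the positive definiteness of the $4\times4$ matrix $e^{-L}(M_{i-1}+M_{i+1})-2M_i$ for the quadratic forms $F_j$ in the coefficients $(a,b,c,d)$, to be checked uniformly in $n\ge1$. The paper carries out exactly this ``honest computation'' with the explicit choice $L=3$: it verifies $n=1$ directly and, for $n\ge2$, writes the matrix in $2\times2$ blocks $\begin{pmatrix}\mathcal A&\mathcal B\\\mathcal B^{T}&\mathcal C\end{pmatrix}$ and shows $\lambda_{\min}(\mathcal A)\lambda_{\min}(\mathcal C)>\|\mathcal B\|^2$ by elementary estimates on the entries---precisely the kind of diagonal-dominance-for-large-$n$ argument you anticipate, with the delicate small-$|\mu_k+\mu_j|$ cross terms absorbed by the block structure rather than handled termwise.
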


The proof is direct computation. First, by our assumption, we may assume that
	\begin{eqnarray*}
		f&=&  \sum_{n=1}^\infty \sum_{l=1}^{h_n} \left( A^l_n r^n + B^l_n r^{-(n+2)} +C^l_n r^{n+2} +D^l_n r^{-n} \right)\varphi_n^l.
	\end{eqnarray*}

Second, since
\begin{equation*}
	F_i(f)=\int_{e^{-i L}}^{e^{-(i-1)L}} \frac{1}{r} \left( \int_{S^3} f^2 d\theta \right) dr
\end{equation*}
and $\set{\varphi_n^l}$ are orthonormal basis, it suffices to prove the theorem for
\begin{equation*}
	f= (A r^n + B r^{-(n+2)} + C r^{n+2} + D r^{-n}) \varphi
\end{equation*}
where $\varphi$ is one of $\set{\varphi_n^l}$.

Finally, we observe that by scaling, it suffices to prove the case $i=0$. Hence, in the following, we assume $f$ is defined on $A_{-1}\cup A_0\cup A_1$.

We note that $F_i$ is a quadratic form of $(A,B,C,D)$. Precisely, we have
\begin{eqnarray*}
	&& \int_{B_{e^{-(i-1)L}}\setminus B_{e^{-iL}}} f^2 \frac{1}{r^4} dx\\
	&=& \int_{e^{-iL}}^{e^{-(i-1)L}} \int_{S^3} (A r^n + B r^{n+2} + C r^{-n} + D r^{-n-2})^2 \varphi^2 \frac{1}{r} dr dV_{S^3} \\
	&=& \int_{e^{-iL}}^{e^{-(i-1)L}} (A r^n + B r^{n+2} + C r^{-n} + D r^{-n-2})^2  \frac{1}{r} dr \\
	&:=& (A,B,C,D) M_i \left(
	\begin{array}[]{c}
		A \\
		B\\
		C\\
		D
	\end{array} \right)
	.
\end{eqnarray*}
Here $M_i$ is a $4\times 4$ matrix.
Direct computation shows
\begin{equation*}
	M_i=
	\left(
	\begin{array}[]{cccc}
		g(2n) & g(2n+2) & g(0) & g(-2) \\
		g(2n+2) & g(2n+4) & g(2) & g(0) \\
		g(0) & g(2) & g(-2n) & g(-2n-2) \\
		g(-2) & g(0) & g(-2n-2) & g(-2n-4)
	\end{array}
	\right)
\end{equation*}
where
\begin{equation*}
	g(\beta)=\int_{e^{-iL}}^{e^{-(i-1)L}}r^{\beta-1} dr = \left\{
		\begin{array}[]{ll}
			L & \quad \beta=0 \\
			\frac{1}{\beta}e^{-i \beta L}(e^{\beta L}-1) & \quad \beta\ne 0
		\end{array}
		\right.
\end{equation*}

By the above discussion, to prove the three circle lemma for biharmonic functions, it suffices to show that the matrix
\begin{equation*}
	e^{-L}(M_{-1}+M_{1})-2M_0
\end{equation*}
is positive definite for some universal constant $L>0$.

It turns out that we can choose the universal constant $L=3$. For $n=1$, we can check by hand (or by computer software) that the above $4$ by $4$ matrix is positive definite. In the following proof, keep in mind that $L=3$ and $n\geq 2$ and we need this to justify certain inequalities. (We will not mention this fact every time we use it.)

We write this matrix in the form of
\begin{equation}\label{eqn:matrix}
	\left(
	\begin{array}[]{cc}
		\mathcal A & \mathcal B \\
		\mathcal B^T & \mathcal C
	\end{array}
	\right).
\end{equation}
Here
\begin{equation*}
	\mathcal A=\left(
	\begin{array}[]{cc}
		\frac{(e^{2nL}-1)(e^{(2n-1)L}+e^{-(2n+1)L}-2)}{2n} & \frac{(e^{(2n+2)L}-1)(e^{(2n+1)L}+e^{-(2n+3)L}-2)}{2n+2}\\
		\frac{(e^{(2n+2)L}-1)(e^{(2n+1)L}+e^{-(2n+3)L}-2)}{2n+2} & \frac{(e^{(2n+4)L}-1) ( e^{(2n+3)L}+e^{-(2n+5)L}-2)}{2n+4}
	\end{array}
	\right),
\end{equation*}

\begin{equation*}
	\mathcal B=\left(
	\begin{array}[]{cc}
		2L(e^{-L}-1) & - \frac{1}{2}(e^{-2L}-1) (e^{-3L}+e^L -2) \\
		\frac{1}{2} (e^{2L}-1)(e^L +e^{-3L}-2) & 2L (e^{-L}-1)
	\end{array}
	\right)
\end{equation*}
and
\begin{equation*}
	\mathcal C=\left(
	\begin{array}[]{cc}
		-\frac{(e^{-2nL}-1)(e^{-(2n+1)L}+e^{(2n-1)L}-2)}{2n} & -\frac{(e^{-(2n+2)L}-1)(e^{-(2n+3)L}+e^{(2n+1)L}-2) }{2n+2}\\
		-\frac{(e^{-(2n+2)L}-1)(e^{-(2n+3)L}+e^{(2n+1)L}-2)}{2n+2} & -\frac{(e^{-(2n+4)L}-1) ( e^{-(2n+5)L}+e^{(2n+3)L}-2)}{2n+4}
	\end{array}
	\right).
\end{equation*}

In order to show that (\ref{eqn:matrix}) is positive definite, we show that both $\mathcal A$ and $\mathcal C$ are positive definite and they dominate $\mathcal B$ so that the whole matrix is positive definite. More precisely, we consider
\begin{eqnarray*}
	&&(x^T,y^T)
	\left(
	\begin{array}[]{cc}
		\mathcal A & \mathcal B \\
		\mathcal B^T & \mathcal C
	\end{array}
	\right)
	\left(
	\begin{array}[]{c}
		x\\
		y
	\end{array}
	\right)\\
	&=& x^T\mathcal A x + x^T \mathcal B y + y^T \mathcal B^T x + y^T \mathcal C y.
\end{eqnarray*}

Let $m$ be the largest coefficient in $\mathcal B$, then
\begin{equation}
	\abs{x^T \mathcal B y} + \abs{y^T \mathcal B^T x}\leq 2 m \abs{x} \abs{y}
	\label{eqn:B}
\end{equation}
Denoting the small eigenvalues of $\mathcal A$ and $\mathcal C$ by $\lambda$ and $\mu$ respectively, we have
\begin{equation*}
	x^T\mathcal Ax\geq \lambda\abs{x}^2,\qquad y^T \mathcal C y \geq \mu \abs{y^2}.
\end{equation*}
Hence, it suffices to show that
\begin{equation*}
	\lambda \abs{x}^2 -2m \abs{x}\abs{y} +\mu \abs{y}^2\geq 0
\end{equation*}
for all $x$ and $y$. This is equivalent to
\begin{equation}
m^2-\lambda \mu<0.
	\label{eqn:final}
\end{equation}

For a two by two symmetric matrix
\begin{equation*}
	\left(
	\begin{array}[]{cc}
		a & b \\
		b &  c
	\end{array}
	\right),
\end{equation*}
the smaller eigenvalue is given by
\begin{equation*}
	\lambda=\frac{4(ac-b^2)}{2(a+c+\sqrt{(a-c)^2+4b^2})}.
\end{equation*}

For matrix $\mathcal A$, since $L=3$, then it is obvious that $c>b>a$ for all $n$. With this in mind, we claim that
\begin{equation}\label{eqn:acb}
	\lambda\geq \frac{ac-b^2}{3 c}.
\end{equation}
In fact, we shall prove below that $ac-b^2>0$ and the claim follow from
\begin{equation*}
	a+c+\sqrt{(a-c)^2+4b^2}\leq 6 c.
\end{equation*}
To see $ac-b^2>0$, we compute
\begin{eqnarray*}
	a&=& \frac{1}{2n}(e^{2nL}-1)(e^{(2n-1)L}+e^{-(2n+1)L}-2) \\
	&=& \frac{1}{2n} \left(  e^{(4n-1) L}- 2 e^{2nL}- e^{(2n-1)L} + e^{-L}-e^{-(2n+1)L}+2 \right) \\
	&\geq& \frac{1}{2n} \left(e^{(4n-1) L}- 2 e^{2nL}- e^{(2n-1)L} \right) \\
	&\geq& \frac{1}{2n} \left( e^{(4n-1)L} -3 e^{2n L} \right).
\end{eqnarray*}
Here we used $e^{-2L}-e^{-(2n+1)L}+2\geq 0$ for all $n$ and $L=3$.

Similarly,
\begin{eqnarray*}
	c&=& \frac{1}{2n+4}(e^{(2n+4)L}-1) ( e^{(2n+3)L}+e^{-(2n+5)L}-2) \\
	&=& \frac{1}{2n+4} \left( e^{(4n+7)L}+ e^{-L}-2 e^{(2n+4)L}-e^{(2n+3)L}-e^{-(2n+5)L}+2 \right) \\
	&\geq& \frac{1}{2n+4} \left( e^{(4n+7)L}-3 e^{(2n+4)L} \right).
\end{eqnarray*}

We also need upper bound of $b$ and $c$.
\begin{eqnarray*}
	c&=& \frac{1}{2n+4}(e^{(2n+4)L}-1) ( e^{(2n+3)L}+e^{-(2n+5)L}-2) \\
	&\leq& \frac{1}{2n+4} e^{(4n+7)L}
\end{eqnarray*}
and
\begin{eqnarray*}
	b&=&  \frac{1}{2n+2}(e^{(2n+2)L}-1)(e^{(2n+1)L}+e^{-(2n+3)L}-2) \\
	&\leq& \frac{1}{2n+2}  e^{(4n+3)L} .
\end{eqnarray*}

\begin{eqnarray*}
	ac-b^2 &\geq& \frac{1}{(2n)(2n+4)} \left( e^{(4n-1)L}-3 e^{2n L} \right) \left( e^{(4n+7)L}- 3 e^{(2n+4)L} \right) \\
	&& -\frac{1}{(2n+2)^2} e^{(8n+6)L}  \\
	&\geq& \frac{1}{(2n)(2n+4)} \left( e^{(8n+6)L}-6 e^{(6n+7)L} \right) -\frac{1}{(2n+2)^2}  e^{(8n+6)L} \\
	&\geq& \frac{4}{(2n)(2n+2)^2(2n+4)}e^{(8n+6)L}-\frac{4}{n^2} e^{(6n+7)L}.
\end{eqnarray*}

\begin{eqnarray*}
	\lambda&\geq& \frac{1}{12 n(n+1)^2} e^{(4n-1)L} -\frac{8}{n} e^{2nL} \\
\end{eqnarray*}
For $L=3$ and all $n\geq 2$, we have
\begin{eqnarray}\label{eqn:lambda}
	\lambda&\geq& \frac{1}{24 n(n+1)^2} e^{(4n-1)L}
\end{eqnarray}
for all $n$.

Now, we repeat the above argument for
\begin{equation*}
	C=\left(
	\begin{array}[]{cc}
		-\frac{e^{-2nL}-1}{2n}(e^{-(2n+1)L}+e^{(2n-1)L}-2) & -\frac{e^{-(2n+2)L}-1}{2n+2}(e^{-(2n+3)L}+e^{(2n+1)L}-2) \\
		-\frac{e^{-(2n+2)L}-1}{2n+2}(e^{-(2n+3)L}+e^{(2n+1)L}-2) & -\frac{e^{-(2n+4)L}-1}{2n+4} ( e^{-(2n+5)L}+e^{(2n+3)L}-2)
	\end{array}
	\right).
\end{equation*}
We still use $a,b$ and $c$ to denote the coefficients in $\mathcal C$ and we still have $a<b<c$.
Similar computation shows that
\begin{eqnarray*}
	a&=& \frac{1}{2n}\left( 1-e^{-2nL} \right)\left( e^{-(2n+1)L}+ e^{(2n-1)L}-2 \right) \\
	&=& \frac{1}{2n} \left( e^{(2n-1)L}+ e^{-(2n+1)L}-2 - e^{-(4n+1)L} -e^{-L}+2 e^{-2nL} \right) \\
	&\geq&\frac{1}{2n} \left( e^{(2n-1)L}-4 \right),
\end{eqnarray*}

\begin{eqnarray*}
	c&=& \frac{1}{2n+4} \left( 1-e^{-(2n+4)L}  \right)( e^{-(2n+5)L}+e^{(2n+3)L}-2) \\
	&=& \frac{1}{2n+4} \left( e^{(2n+3)L}+ e^{-(2n+5)L}-2 -e^{-(4n+9)L}- e^{-L} +2 e^{-(2n+4)L} \right),
\end{eqnarray*}
\begin{equation*}
	\frac{1}{2n+4}\left( e^{(2n+3)L}-4 \right)\leq c\leq \frac{1}{2n+4} e^{(2n+3)L},
\end{equation*}

\begin{eqnarray*}
	b&=& \frac{1}{2n+2}\left( 1-e^{-(2n+2)L} \right)\left( e^{-(2n+3)L}+ e^{(2n+1)L}-2 \right) \\
	&\leq & \frac{1}{2n+2} e^{(2n+1)L},
\end{eqnarray*}
and
\begin{eqnarray*}
	ac-b^2 &\geq& \frac{1}{(2n)(2n+4)}\left( e^{(2n-1)L}-4 \right) \left( e^{(2n+3)L}-4 \right) - \frac{1}{(2n+2)^2} e^{(4n+2)L} \\
	&\geq& \frac{4}{(2n)(2n+4)(2n+2)^2} e^{(4n+2)L} -\frac{8}{(2n)(2n+4)} e^{(2n+3)L}.
\end{eqnarray*}

In summary,
\begin{eqnarray*}
	\mu&\geq& \frac{ac-b^2}{3c} \\
	&\geq& \frac{2n+4}{3}e^{-(2n+3)L} \left( \frac{4}{(2n)(2n+4)(2n+2)^2} e^{(4n+2)L} -\frac{8}{(2n)(2n+4)} e^{(2n+3)L} \right)  \\
	&\geq& \frac{1}{12n(n+1)^2} e^{(2n-1)L}.
\end{eqnarray*}

Finally, by the formula of $\mathcal B$, we see $m\leq \frac{1}{2}e^{3L}$. For $n\geq 2$, $m^2<\lambda\mu$ is implied by
\begin{equation*}
	\frac{1}{4}e^{6L}<\frac{1}{24n(n+1)}e^{(4n-1)L} \frac{1}{12n(n+1)^2}e^{(2n-1)L},
\end{equation*}
which is true if $L=3$.
This concludes the proof of Theorem \ref{thm:tcfunction}.

\subsection{approximate biharmonic functions}
\label{subsec:approx}
By an approximate biharmonic function, we mean a smooth solution $u$ defined on $B_{r_2}\setminus B_{r_1}$ satisfying
\begin{eqnarray}\label{eqn:approx}
\triangle^2u (r,\theta)&=& a_1\nabla\triangle u+a_2\nabla^2 u+a_3\nabla u+a_4 u \\ \nonumber
&& +
\frac{1}{\abs{\partial B_r}}\int_{\partial B_r}{b_1\nabla\triangle u+b_2\nabla^2 u+b_3\nabla u+b_4 u}.
\end{eqnarray}
Here $a_i$ and $b_i$ are smooth functions, which will be small in the sense that for some small $\eta>0$,
\begin{equation}\label{eqn:small}
	\abs{a_j}+\abs{b_j}\leq \frac{\eta}{\abs{x}^j}\quad \mbox{on} \quad B_{r_2}\setminus B_{r_1}.
\end{equation}
Sometimes, to emphasize (\ref{eqn:small}), we say the function is an $\eta-$approximate biharmonic function.
\begin{rem}
	Note that if $u$ is $\eta-$approximate biharmonic function on $B_{r_2}\setminus B_{r_1}$, $u(\frac{x}{\lambda})$ is also $\eta-$approximate biharmonic on $B_{\lambda r_2}\setminus B_{\lambda r_1}$.
\end{rem}

(\ref{eqn:approx}) is not the usual type of PDE because of the integral term. However, we still have the interior $L^p$ estimate.

\begin{lem}
	\label{lem:lp}
	Suppose that $u: B_4\setminus B_1\to \Real$ is an approximate biharmonic function and
	\begin{equation*}
		\sum_{i=1}^4 \norm{a_i}_{L^\infty} +\norm{b_i}_{L^\infty}\leq C.
	\end{equation*}
	Then we have
	\begin{equation*}
		\norm{u}_{W^{4,p}(B_3\setminus B_2)}\leq C \norm{u}_{L^p(B_4\setminus B_1)}.
	\end{equation*}
\end{lem}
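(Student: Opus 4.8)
The statement is an interior $L^p$ estimate for the fourth-order equation (\ref{eqn:approx}), so the natural tool is standard $L^p$ (Calderón–Zygmund) theory for $\triangle^2$ together with a bootstrap to absorb the lower-order and nonlocal terms. The plan is as follows. First fix a cutoff function $\chi\in C_c^\infty(B_{7/2}\setminus B_{3/2})$ with $\chi\equiv 1$ on $B_3\setminus B_2$, and consider $v=\chi u$. Then $\triangle^2 v = \chi\,\triangle^2 u + (\text{terms involving at most three derivatives of } u \text{ times derivatives of } \chi)$. On the support of $\chi$ we substitute (\ref{eqn:approx}) for $\triangle^2 u$. The global $L^p$ estimate for the bi-Laplacian on $\R^4$ (or on a fixed large ball with zero boundary data) gives
\begin{equation*}
\norm{v}_{W^{4,p}}\leq C\norm{\triangle^2 v}_{L^p}\leq C\Big(\norm{u}_{W^{3,p}(B_{7/2}\setminus B_{3/2})}+\norm{I(u)}_{L^p(B_{7/2}\setminus B_{3/2})}\Big),
\end{equation*}
where $I(u)$ denotes the integral (nonlocal) term in (\ref{eqn:approx}); note the coefficients $a_i,b_i$ are bounded by hypothesis, so they contribute only the constant $C$.

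Next I would handle the nonlocal term. The point is that $\frac{1}{\abs{\partial B_r}}\int_{\partial B_r}(\cdots)\,d\sigma$ is a function of $r$ alone, and by the trace theorem and interpolation its $L^p$ norm over the annulus is controlled by $\norm{u}_{W^{3,p}(B_4\setminus B_1)}$ (one loses the same number of derivatives as the local term, three, plus the dimensional loss of the trace, which is harmless since we are averaging over the sphere and then integrating in $r$; concretely one can bound the spherical average of $|\nabla^k u|$, $k\leq 3$, in $L^p_r$ by $\norm{\nabla^k u}_{L^p}$ on a slightly larger annulus using Fubini and Hölder). So both the local and nonlocal right-hand sides are dominated by $\norm{u}_{W^{3,p}}$ on a slightly larger annulus. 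Combining with the cutoff estimate yields
\begin{equation*}
\norm{u}_{W^{4,p}(B_3\setminus B_2)}\leq C\norm{u}_{W^{3,p}(B_{7/2}\setminus B_{3/2})}.
\end{equation*}

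Finally I would close the loop by iterating the interpolation inequality $\norm{u}_{W^{3,p}}\leq \eps\norm{u}_{W^{4,p}}+C_\eps\norm{u}_{L^p}$ together with a standard covering/shrinking-annuli argument: choose a finite nested chain of annuli between $B_4\setminus B_1$ and $B_3\setminus B_2$, apply the above estimate on each link with a small parameter, and absorb the $W^{4,p}$ terms on the left. This is the classical "absorb the higher-order norm by interpolation over a shrinking domain" device, and it gives the clean bound $\norm{u}_{W^{4,p}(B_3\setminus B_2)}\leq C\norm{u}_{L^p(B_4\setminus B_1)}$.

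The only genuinely non-routine point is the treatment of the nonlocal averaging term, since it is not a differential operator and one must check it behaves like a lower-order term under $L^p$ estimates; everything else is the textbook elliptic $L^p$ bootstrap. I do not expect this to be hard — Fubini plus Hölder on the sphere reduces it to the local case — but it is the step that distinguishes this lemma from the standard one, so it deserves the most care. (One should also remark that the same proof gives the estimate with $B_4\setminus B_1$ replaced by any pair of concentric annuli with positive distance between their boundaries, after rescaling, which is the form actually used later.)
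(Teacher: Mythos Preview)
Your proposal is correct and follows essentially the same route as the paper: cut off, apply the $L^p$ estimate for $\triangle^2$, bound the nonlocal spherical-average term by Jensen/H\"older on spheres plus Fubini (your parenthetical ``Fubini plus H\"older'' is exactly what the paper does---the trace theorem is unnecessary), and then absorb the lower-order norms by interpolation over shrinking annuli. The paper carries out the absorption step via the weighted seminorms $\Psi_j=\sup_{0\le\sigma\le1}(1-\sigma)^j\norm{\nabla^j u}_{L^p(A_\sigma)}$ with a radial cutoff depending on $\sigma$, which is the precise form of the ``classical shrinking-domain device'' you invoke; a literal finite chain would not close without this supremum trick, so be sure to use it when you write the details.
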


There is a similar lemma for approximate harmonic function in \cite{Liu} (see Lemma 3.1).

\begin{proof}
	For $0< \sigma<1$, set $A_\sigma= B_{3+\sigma}\setminus B_{2-\sigma}$ and $A'_\sigma= B_{3+\frac{1+\sigma}{2}}\setminus B_{2-\frac{1+\sigma}{2}}$. Let $\varphi$ be a cut-off function supported in $A'_\sigma$ satisfying: (1) $\varphi\equiv 1$ in $A_\sigma$; (2) $\abs{\nabla^j \varphi}\leq \frac{c}{(1-\sigma)^j}$ for $j=1,2,3,4$ and some universal constant $c$; (3) $\varphi$ is a function of $\abs{x}$.

Computing directly, we have
\begin{eqnarray*}
\triangle^2(\varphi u)&=&\triangle(\varphi\triangle u+2\nabla\varphi\nabla u+u\triangle\varphi)\\
&=&\varphi\triangle^2 u+4\nabla\triangle u\nabla\varphi+4\nabla^2u\nabla^2\varphi+2\triangle u\triangle\varphi+4\nabla\triangle \varphi\nabla u+\triangle^2\varphi u\\
&=&  \varphi a_1 \nabla\triangle u + \varphi  a_2 \nabla^2  u + \varphi a_3 \nabla  u +\varphi  a_4 u \\
&& + \varphi \frac{1}{\abs{\partial B_r}}\int_{\partial B_r} b_1 \nabla\triangle u + b_2 \nabla^2 u + b_3 \nabla  u + b_4  u \\
&& + 4\nabla\triangle u\nabla\varphi+4\nabla^2u\nabla^2\varphi+2\triangle u\triangle\varphi+4\nabla\triangle \varphi\nabla u+\triangle^2\varphi u.
\end{eqnarray*}
Next, we estimate the $L^p (p>1)$ norm of the right hand side of the above equation. By our choice of $\varphi$ and the assumption of $a_1$, we have
\begin{equation*}
	\norm{\nabla \triangle u \nabla \varphi}_{L^p(A'_\sigma)} + \norm{\varphi a_1 \nabla \triangle u}_{L^p(A'_\sigma)} \leq \frac{C}{1-\sigma} \norm{\nabla^3 u}_{L^p(A'_\sigma)}
\end{equation*}
Moreover, Jensen's inequality implies that
\begin{eqnarray*}
	&& \int_{A'_\sigma} \frac{\varphi^p}{\abs{\partial B_r}^p} \left( \int_{\partial B_r} b_1 \nabla \triangle u \right)^p dx \\
	&\leq& \int_{A'_{\sigma}} \varphi^p \frac{1}{\abs{\partial B_r}}\left( \int_{\partial B_r} \abs{b_1 \nabla \triangle u}^p \right) dx \\
	&\leq & C \int_{A'_\sigma} \abs{\nabla^3 u}^p dx.
\end{eqnarray*}
Similar argument applies to the remaining terms and gives an estimate of $L^p$ norm of $\triangle^2(\varphi u)$, by which the $L^p$ estimate of bi-Laplace operator implies
\begin{equation*}
	\norm{\varphi u}_{W^{4,p}(A'_\sigma)}\leq C \left( \frac{\norm{\nabla^3 u}_{L^p(A'_\sigma)}}{1-\sigma} +\frac{\norm{\nabla^2 u}_{L^p(A'_\sigma)}}{(1-\sigma)^2} +\frac{\norm{\nabla u}_{L^p(A'_\sigma)}}{(1-\sigma)^3 } +\frac{\norm{u}_{L^p(A'_\sigma)}}{(1-\sigma)^4}  \right).
\end{equation*}
In particular, we have
\begin{eqnarray*}
	(1-\sigma)^4 \norm{\nabla^4 u}_{L^p(A_\sigma)} & \leq&  C \left( (1-\sigma)^3 {\norm{\nabla^3 u}_{L^p(A'_\sigma)}} + (1-\sigma)^2 {\norm{\nabla^2 u}_{L^p(A'_\sigma)}}\right.  \\
	&& \left. + (1-\sigma) {\norm{\nabla u}_{L^p(A'_\sigma)}} + {\norm{u}_{L^p(A'_\sigma)}}  \right).
\end{eqnarray*}
By setting
\begin{equation*}
	\Psi_j=\sup_{0\leq \sigma\leq 1} (1-\sigma)^j \norm{\nabla^j u}_{L^p (A_\sigma)}
\end{equation*}
and noting that
\begin{equation*}
	A'_\sigma=A_{\frac{1+\sigma}{2}}\quad \mbox{and}\quad 1-\sigma= 2 (1-\frac{1+\sigma}{2}),
\end{equation*}
we obtain
\begin{equation}\label{eqn:seminorm}
	\Psi_4\leq C(\Psi_3 +\Psi_2 +\Psi_1 +\Psi_0).
\end{equation}

We claim that for $j=1,2,3$, the following interpolation inequality holds for any $\epsilon>0$,
\begin{equation*}
	\Psi_j\leq \epsilon^{4-j} \Psi_4 +\frac{C}{\epsilon^j} \Psi_0.
\end{equation*}
In fact, by the definition of $\Psi_j$, for any $\gamma>0$, there is $\sigma_\gamma\in [0,1]$ such that
\begin{eqnarray*}
	\Psi_j &\leq& (1-\sigma_j)^j \norm{\nabla^j u}_{L^p(A_{\sigma_\gamma})}+ \gamma \\
	&\leq& \epsilon^{4-j}(1-\sigma_\gamma)^4 \norm{\nabla^4 u}_{L^p(A_{\sigma_\gamma})} +\frac{C}{\epsilon^j}\norm{u}_{L^p(A_{\sigma_\gamma})} +\gamma \\
	&\leq& \epsilon^{4-j} \Psi_4 + \frac{C}{\epsilon^j} \Psi_0 +\gamma .
\end{eqnarray*}
Here we used the interpolation inequality
\begin{equation*}
	\norm{\nabla^j u}_{L^p(A_{\sigma_\gamma})}\leq \eta^{4-j}\norm{\nabla^4 u}_{L^p(A_{\sigma_\gamma})}+\frac{C}{\eta^j} \norm{u}_{L^p(A_{\sigma_\gamma})}
\end{equation*}
with $\eta= \epsilon (1-\sigma_\gamma)$. We remark that the constant in the above interpolation inequality is independent of $\sigma_\gamma\in [0,1]$ (see the proof of Lemma 5.6 in \cite{adams}).
By sending $\gamma$ to $0$ and choosing small $\epsilon$, we obtain from (\ref{eqn:seminorm})
\begin{equation*}
	\Psi_4\leq C\Psi_0,
\end{equation*}
from which our lemma follows.
\end{proof}

Now, we prove the three circle lemma for approximate biharmonic functions. For two positive integers $l_1$ and $l_2$ with ($l_1>l_2$), set
\begin{equation*}
	\Sigma=\bigcup_{i=l_2}^{l_1} A_i
\end{equation*}
and recall that $A_i= B_{e^{-(i-1)L}}\setminus B_{e^{-iL}}$ where $L$ is the universal constant in Theorem \ref{thm:tcfunction}.

\begin{thm}
	\label{thm:tcapprox}
	There is some constant $\eta_0>0$ such that the following is true. Assume that $u:\Sigma\to \Real^K$ is an $\eta_0-$approximate biharmonic function in the sense of (\ref{eqn:approx}) and that
	\begin{equation}\label{eqn:notheta}
		\int_{\partial B_r} u d\theta=0
	\end{equation}
	for $r\in [e^{-l_1 L}, e^{-(l_2-1)L}]$. Then for any integer $i$ with $l_1>i>l_2$, we have

	(a) if $F_{i+1}(u)\leq e^{-L} F_i(u)$, then $F_i(u)\leq e^{-L} F_{i-1}(u)$;

	(b) if $F_{i-1}(u)\leq e^{-L} F_i(u)$, then $F_i(u)\leq e^{-L} F_{i+1}(u)$;

	(c) either $F_i(u)\leq e^{-L} F_{i-1}(u)$, or $F_i(u)\leq e^{-L} F_{i+1}(u)$.
\end{thm}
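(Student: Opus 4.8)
The plan is to derive Theorem \ref{thm:tcapprox} from the corresponding statement for genuine biharmonic functions, Theorem \ref{thm:tcfunction}, by a compactness/contradiction argument combined with the interior estimate of Lemma \ref{lem:lp}. The three statements (a), (b), (c) are all of the same flavor: they say that on three consecutive annuli an $\eta_0$-approximate biharmonic function with vanishing angular average behaves, up to the loss of a tiny bit in the constant, like a genuine biharmonic one, for which Theorem \ref{thm:tcfunction} gives $2F_i < e^{-L}(F_{i-1}+F_{i+1})$; one checks easily that this strict inequality for biharmonic $f$ implies each of (a), (b), (c) for $f$ itself (e.g. if $F_{i+1}\le e^{-L}F_i$ then $2F_i < e^{-L}F_{i-1} + e^{-2L}F_i < e^{-L}F_{i-1} + F_i$, hence $F_i < e^{-L}F_{i-1}$). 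So it suffices to show that each implication survives a small perturbation.

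First I would reduce to a normalized situation: by the scaling remark following (\ref{eqn:approx}) and the scaling property $F_i(u(\cdot/\lambda)) = F_{i-\log\lambda/L}(u)$, we may assume the three annuli in question are $A_{-1}\cup A_0\cup A_1$, i.e. we work on a fixed dyadic-type region $\Sigma_0 = B_{e^{2L}}\setminus B_{e^{-L}}$ independent of $i$. Suppose (a) fails for some sequence $\eta_0 = \eta_k \to 0$: there are $\eta_k$-approximate biharmonic functions $u_k$ on $\Sigma_0$ with $\int_{\partial B_r} u_k\, d\theta = 0$, with $F_1(u_k)\le e^{-L}F_0(u_k)$ but $F_0(u_k) > e^{-L}F_{-1}(u_k)$. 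Normalize so that $F_{-1}(u_k) + F_0(u_k) + F_1(u_k) = 1$; since this quantity is comparable to $\|u_k\|_{L^2(\Sigma_0)}^2$ (the weight $|x|^{-4}$ is bounded above and below on the fixed region), the $u_k$ are bounded in $L^2$. The coefficients $a_j^{(k)}, b_j^{(k)}$ satisfy $|a_j^{(k)}| + |b_j^{(k)}| \le \eta_k |x|^{-j} \le C\eta_k$ on $\Sigma_0$, so in particular $\sum_j \|a_j^{(k)}\|_{L^\infty} + \|b_j^{(k)}\|_{L^\infty}$ is uniformly bounded, and Lemma \ref{lem:lp} (applied on slightly shrunken annuli, after covering $\Sigma_0$ by finitely many translates/rescalings of the $B_4\setminus B_1$ configuration) gives a uniform $W^{4,p}$ bound for $u_k$ on a slightly smaller region containing the closure of the three annuli we care about. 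By Rellich, a subsequence converges strongly in $C^2$ (take $p$ large, so $W^{4,p}\hookrightarrow C^{3,\alpha}$) to a limit $u_\infty$.

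Then I would identify the limit: passing to the limit in (\ref{eqn:approx}), since each coefficient tends to $0$ uniformly and $\nabla\triangle u_k, \nabla^2 u_k, \nabla u_k, u_k$ are uniformly bounded, the entire right-hand side (including the integral term, by dominated convergence on each sphere) tends to $0$, so $\triangle^2 u_\infty = 0$, i.e. $u_\infty$ is a genuine biharmonic function on the three annuli. The normalization and the angular-average condition pass to the limit: $\int_{\partial B_r} u_\infty\, d\theta = 0$ and $F_{-1}(u_\infty) + F_0(u_\infty) + F_1(u_\infty) = 1$, so $u_\infty$ is nonzero. By $C^2$ (indeed $L^2$) convergence, $F_i(u_k)\to F_i(u_\infty)$ for $i=-1,0,1$, so the limit inequalities $F_1(u_\infty)\le e^{-L}F_0(u_\infty)$ and $F_0(u_\infty)\ge e^{-L}F_{-1}(u_\infty)$ hold — but by Theorem \ref{thm:tcfunction} the first of these forces the strict inequality $F_0(u_\infty) < e^{-L}F_{-1}(u_\infty)$, a contradiction. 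The same argument handles (b) (symmetric), and (c) (if both $F_0 > e^{-L}F_{-1}$ and $F_0 > e^{-L}F_1$, the limit violates $2F_0 < e^{-L}(F_{-1}+F_1)$). I expect the only real subtlety to be bookkeeping: making sure Lemma \ref{lem:lp} can be invoked to get estimates up to (a neighborhood of) the boundary circles $|x| = e^{-(i\pm 2)L}$ that bound $A_{i-1}\cup A_i\cup A_{i+1}$ — this just needs the three-annulus region $\Sigma$ in the hypothesis of Theorem \ref{thm:tcapprox} to be slightly larger than $A_{i-1}\cup A_i\cup A_{i+1}$, which is exactly how $\Sigma$ is set up, so one covers the closure of the relevant annuli by interior balls on which Lemma \ref{lem:lp} applies. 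The only place where $\eta_0$ enters quantitatively is through "$\eta_k\to 0$", so the theorem holds for some sufficiently small universal $\eta_0$ without any explicit value being needed.
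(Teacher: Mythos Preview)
Your approach is the paper's: scale to a fixed triple of annuli, suppose one of (a)--(c) fails along a sequence with $\eta_k\to 0$, use Lemma~\ref{lem:lp} for compactness, and contradict Theorem~\ref{thm:tcfunction} in the limit. The overall structure is correct.

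There is, however, a real issue in the bookkeeping you flag at the end. Your claim that ``$\Sigma$ is exactly set up'' to be strictly larger than $A_{i-1}\cup A_i\cup A_{i+1}$ is false when $i=l_2+1$ or $i=l_1-1$ (both allowed by the hypothesis $l_2<i<l_1$): in those cases one of the outer annuli $A_{i\pm1}$ reaches the boundary of $\Sigma$, and Lemma~\ref{lem:lp} is a genuinely interior estimate, so you cannot get $W^{4,p}$ control up to that boundary and hence cannot claim strong convergence on all three annuli. With only weak $L^2$ convergence on $A_{i\pm1}$, your normalization $F_{-1}+F_0+F_1=1$ no longer guarantees $u_\infty\ne 0$.

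The paper fixes both points at once. First, it observes that in each failure case the assumed inequalities already bound $F_{i\pm1}(u_k)$ by $e^L F_i(u_k)$, so one may normalize $F_i(u_k)=1$ on the \emph{middle} annulus. Then Lemma~\ref{lem:lp} (applied with $A_{i-1}\cup A_{i+1}$ as buffer) gives strong $L^2$ convergence on $A_i$, forcing $F_i(u_\infty)=1$ and hence $u_\infty\not\equiv 0$. On the outer annuli one settles for weak $L^2$ convergence, and lower semicontinuity of the norm gives $F_{i\pm1}(u_\infty)\le\liminf F_{i\pm1}(u_k)$; this inequality goes in exactly the right direction to pass each of the failure assumptions to the limit and contradict the strict inequality $2F_i(u_\infty)<e^{-L}(F_{i-1}(u_\infty)+F_{i+1}(u_\infty))$.
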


\begin{proof}
	The exact value of $i$ does not matter, because $F_i$ is invariant under scaling. Hence, we consider only the case of $i=2$. Assume the theorem is not true. We have a sequence of $\eta_k\to 0$ and a sequence of $u_k$ defined on $A_1\cup A_2\cup A_3$ satisfying
\begin{eqnarray}\label{eqn:kk}
	\triangle^2u_k (r,\theta)&=& a_{k1}\nabla\triangle u_k+a_{k2}\nabla^2 u_k+a_{k3}\nabla u_k+a_{k4} u_k \\ \nonumber
&& +
\frac{1}{\abs{\partial B_r}}\int_{\partial B_r}{b_{k1}\nabla\triangle u_k+b_{k2}\nabla^2 u_k+b_{k3}\nabla u_k+b_{k4} u_k}
\end{eqnarray}
with
\begin{equation}\label{eqn:kksmall}
	\abs{a_{ki}}+\abs{b_{ki}}\leq \eta_k \quad \mbox{on} \quad A_1\cup A_2\cup A_3.
\end{equation}
By taking subsequence, we assume that one of (a), (b) and (c) is not true for $u_k$. If (a) is not true, then we have
\begin{equation*}
	F_2(u_k)\geq e^{L} F_3(u_k) \quad\mbox{and}\quad F_2(u_k)> e^{-L}F_1(u_k).
\end{equation*}
If (b) is not true, then
\begin{equation*}
	F_2(u_k)\geq e^{L} F_1(u_k) \quad\mbox{and}\quad F_2(u_k)> e^{-L}F_3(u_k).
\end{equation*}
If (c) is not true, then
\begin{equation*}
	F_2(u_k)> e^{-L} \max \{F_1(u_k), F_3(u_k)\}.
\end{equation*}
In any case, we control $F_1(u_k)$ and $F_3(u_k)$ by $F_2(u_k)$. Multiplying by a constant to $u_k$ if necessary, we assume that $F_2(u_k)=1$ for all $k$. The above discussion shows that
\begin{equation*}
	\norm{u_k}_{L^2(A_1\cup A_2\cup A_3)}\leq C.
\end{equation*}
Lemma \ref{lem:lp} shows that (by passing to a subsequence) we have
\begin{eqnarray*}
u_k\rightharpoonup u\quad && weakly \quad in \quad L^2(A_{1}\cup A_2\cup A_3),\\
u_k\rightarrow u \quad
&& strongly \quad in \quad L^2(A_2).
\end{eqnarray*}
By (\ref{eqn:kk}) and (\ref{eqn:kksmall}), we know that $u$ is a nonzero biharmonic function defined on $A_1\cup A_2\cup A_3$ satisfying (\ref{eqn:notheta}). Theorem \ref{thm:tcfunction} implies that
\begin{equation}\label{eqn:good}
	2F_2(u)< e^{-L} (F_1(u)+F_3(u)).
\end{equation}
If (c) does not hold for $u_k$, we have
\begin{equation*}
	2F_2(u_k)\geq e^{-L}(F_1(u_k)+F_3(u_k)).
\end{equation*}
By the strong convergence of $u_k$ in $L^2(A_2)$ and weak convergence in $L^2(A_1\cup A_2\cup A_3)$, we have
\begin{equation*}
	2F_2(u)\geq e^{-L}(F_1(u)+F_3(u)),
\end{equation*}
which is a contradiction to (\ref{eqn:good}). Similar argument works for other cases.
\end{proof}
\section{decay of tangential energy}\label{sec:tan}

In this section, we assume that $u_i$ is a sequence of biharmonic maps defined on $B_1\subset \Real^4$, which blows up at $0$, converges to a weak limit $u_\infty$ and for some sequence $\lambda_i\to 0$, we obtain the only bubble map
\begin{equation*}
	\omega(x)=\lim_{i\to \infty} u_i(\lambda_i x).
\end{equation*}

The neck region is $\Sigma=B_\delta\setminus B_{\lambda_i R}$ for small $\delta$ and large $R$. Assume without loss of generality that
\begin{equation*}
	\Sigma=\bigcup_{l=l_0}^{l_i} A_l
\end{equation*}
for $A_l=B_{e^{-(l-1)L}}\setminus B_{e^{-lL}}$ and $l_0<l_i$. Note that $l_i$ is related to $\lambda_i$ and changes with $i$.

As in \cite{DT}, for any $\varepsilon>0$, we may assume by choosing $\delta$ small and $R$ large, that
\begin{equation}\label{eqn:DT}
	\int_{A_l} \abs{\nabla^2 u_i}^2 +\abs{\nabla u_i}^4 <\varepsilon^4<\varepsilon_0,
\end{equation}
for $l=l_0,\cdots,l_i$ and sufficiently large $i$.
Since our aim is to prove
\begin{equation*}
	\lim_{\delta\to 0}\lim_{R\to \infty}\lim_{i\to \infty} \mbox{osc}_{B_\delta\setminus B_{\lambda_i R}} u_i=0,
\end{equation*}
it suffices to show that for any $\varepsilon>0$ and let $\delta$ and $R$ be determined as above and show
\begin{equation*}
	\mbox{osc}_{B_\delta\setminus B_{\lambda_i R}} u_i<C\varepsilon
\end{equation*}
for $i$ sufficiently large.

Set
\begin{equation*}
	u_i^*(r)=\frac{1}{\abs{\partial B_r}} \int_{\partial B_r} u(r,\theta) d\sigma.
\end{equation*}
By scaling and Poincar\'e inequality, we see
\begin{equation}\label{eqn:poincare}
	\int_{A_l} \frac{1}{\abs{x}^4}\abs{u_i-u_i^*}^2 dx \leq C \varepsilon^2.
\end{equation}

\begin{lem}\label{lem:beapprox}
	There exists some $\varepsilon_1>0$ that if $\varepsilon<\varepsilon_1$ in (\ref{eqn:DT}), $w_i=u_i-u_i^*$ is an $\eta_0-$approximate biharmonic function in the sense of (\ref{eqn:approx}). Here $\eta_0$ is the constant in Theorem \ref{thm:tcapprox}.
\end{lem}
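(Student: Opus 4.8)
The goal is to show that $w_i = u_i - u_i^*$ satisfies an equation of the form (\ref{eqn:approx}) with coefficients obeying the smallness bound (\ref{eqn:small}) for $\eta = \eta_0$, once $\varepsilon$ in (\ref{eqn:DT}) is small enough. The plan is to start from the Euler--Lagrange equation (\ref{eqn:ELex}) for the extrinsic biharmonic map $u_i$, namely
\begin{equation*}
	\triangle^2 u_i = \triangle(B(u_i)(\nabla u_i,\nabla u_i)) + 2\nabla\cdot\langle\triangle u_i,\nabla(P(u_i))\rangle - \langle\triangle(P(u_i)),\triangle u_i\rangle,
\end{equation*}
and to expand the right-hand side by the Leibniz rule. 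Every term on the right is a sum of products in which one factor is a smooth function of $u_i$ (a derivative of $B$ or $P$ evaluated at $u_i$, hence bounded since $N$ is compact) and the remaining factors are derivatives of $u_i$ of total order at most three, with the total number of derivatives landing on the various factors being exactly four. So schematically $\triangle^2 u_i = \sum (\text{smooth})\cdot \nabla^{j_1}u_i \cdots \nabla^{j_m}u_i$ with $j_1+\cdots+j_m = 4$ and each $j_k\geq 1$. Since $\triangle^2 w_i = \triangle^2 u_i$ (the spherical average $u_i^*$ is radial and $\triangle^2$ applied to it only contributes to the radial part, but more importantly we only need an equation that $w_i$ solves, and $\triangle^2 u_i^*$ can be absorbed — actually cleaner: write $\triangle^2 w_i = \triangle^2 u_i - \triangle^2 u_i^*$ and note $\triangle^2 u_i^*$ is itself a radial function we will have to account for), the first task is to massage the right-hand side of the EL equation into the exact template of (\ref{eqn:approx}), i.e. a sum of a linear-in-$\nabla\triangle u_i, \nabla^2 u_i, \nabla u_i, u_i$ part plus its spherical average, with coefficients $a_j, b_j$.

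The mechanism for turning the quadratic (and higher) nonlinearity into a \emph{linear} equation with \emph{small} coefficients is the $\varepsilon$-regularity theorem (Theorem \ref{thm:regularity}) together with rescaling. On each annulus $A_l$, (\ref{eqn:DT}) gives $\int_{A_l}|\nabla^2 u_i|^2 + |\nabla u_i|^4 < \varepsilon^4 < \varepsilon_0$, so after rescaling $A_l$ to a fixed annulus $B_4\setminus B_1$ (say), Theorem \ref{thm:regularity} yields $\|u_i - \overline{u_i}\|_{W^{4,p}} \leq C\varepsilon^2$ on the rescaled slightly-smaller annulus, for $p$ slightly larger than $1$; Sobolev embedding in dimension four then upgrades this to pointwise bounds $|\nabla u_i| \lesssim \varepsilon^2/|x|$, $|\nabla^2 u_i|\lesssim \varepsilon^2/|x|^2$, $|\nabla^3 u_i|\lesssim \varepsilon^2/|x|^3$ on the (rescaled, and hence after undoing the scaling, on the original) interior of each $A_l$, with constants independent of $l$ and $i$ because the rescaled problem is always the same. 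Now in each monomial $(\text{smooth})\cdot\nabla^{j_1}u_i\cdots\nabla^{j_m}u_i$ with $\sum j_k = 4$, I peel off \emph{one} factor $\nabla^{j_k}u_i$ with $j_k \in \{1,2,3\}$ (or, if the only factor is $\nabla^4 u_i$, treat it as $\nabla\triangle u_i$ up to lower order — here one must check the EL equation actually never produces a bare $\nabla^4 u_i$, which is true since the right side of (\ref{eqn:ELex}) has at most three derivatives on $u_i$ in any factor) to serve as the ``$\nabla^{4-j}u_i$'' in the template, and lump the smooth function together with all the \emph{other} derivative factors into the coefficient $a_j$. Each such coefficient is then a product of a bounded function with factors $\nabla^{j'}u_i$, $j'\geq 1$, of total order $4 - j_k \geq 1$, so by the pointwise estimates above $|a_j| \lesssim \varepsilon^2/|x|^{4-j_k}$ — exactly the decay $\eta/|x|^j$ required in (\ref{eqn:small}) with $\eta \sim \varepsilon^2$. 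The terms coming from $\triangle^2 u_i^*$: since $u_i^*(r)$ is a spherical average, its derivatives are controlled by spherical averages of derivatives of $u_i$, and one rewrites $\triangle^2 u_i^*$ (which involves at most $\partial_r^4 u_i^*$ and lower, expressible through $\int_{\partial B_r}$ of the EL right-hand side after integrating the equation over spheres) as precisely the $b_j$-terms (the $\frac{1}{|\partial B_r|}\int_{\partial B_r}$ part) of (\ref{eqn:approx}), again with $|b_j|\lesssim \varepsilon^2/|x|^j$ by the same pointwise bounds applied inside the average. Choosing $\varepsilon_1$ so that $C\varepsilon_1^2 < \eta_0$ finishes it.

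The main obstacle, and the step deserving real care, is the bookkeeping in matching the EL right-hand side to the \emph{exact} form (\ref{eqn:approx}): one must verify (i) that after all the Leibniz expansions, every term genuinely has one ``free'' derivative factor of order in $\{1,2,3\}$ plus coefficient of complementary positive order — in particular that no term forces a $\nabla^4 u_i = \nabla\triangle u_i + \text{curvature}$ substitution that spoils the structure — and (ii) that the $\triangle^2 u_i^*$ contribution really is of the spherical-average type with the right homogeneity; concretely, integrating the biharmonic map equation over $\partial B_r$ and using that $\triangle^2$ commutes appropriately with the radial average, one expresses $\partial_r$-derivatives of $u_i^*$ up to order four in terms of $\frac{1}{|\partial B_r|}\int_{\partial B_r}$ of products of derivatives of $u_i$, and checks each has the decay $\varepsilon^2/|x|^j$. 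The scaling-invariance of the pointwise bounds from $\varepsilon$-regularity (uniform in $l$) is what makes the coefficient bounds (\ref{eqn:small}) hold globally on $\Sigma$ rather than just on a single annulus, and it is worth stating explicitly that the constant $C$ in $|\nabla^j u_i|\leq C\varepsilon^2/|x|^j$ is independent of $l$ and $i$. Everything else is routine.
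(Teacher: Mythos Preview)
Your overall strategy is right --- use the schematic structure of the Euler--Lagrange equation together with the scaled $\varepsilon$-regularity to get pointwise bounds $|\nabla^k u_i|\leq C\varepsilon/|x|^k$, and subtract the spherically averaged equation --- but there is a real gap in how you produce the template (\ref{eqn:approx}). That equation, applied to $w_i$, requires the right-hand side to be \emph{linear in $w_i$ and its derivatives}, not in $u_i$; you explicitly aim for ``linear-in-$\nabla\triangle u_i,\nabla^2 u_i,\nabla u_i,u_i$''. Your procedure of peeling off one factor $\nabla^{j_k}u_i$ yields terms $a_j\nabla^j u_i$, and after writing $\nabla^j u_i=\nabla^j w_i+\nabla^j u_i^*$ and subtracting the spherical average you are left with a residual $(a_j-\overline{a_j})\,\nabla^j u_i^*$. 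This is pointwise small but is \emph{not} of the admissible form $a_j'\nabla^j w_i$ or $\frac{1}{|\partial B_r|}\int b_j\nabla^j w_i$, and since (\ref{eqn:approx}) carries no free inhomogeneous term it cannot be absorbed (you cannot divide by $w_i$ to manufacture an $a_4$-coefficient).

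The paper closes this gap by a telescoping rewrite rather than by peeling off a factor of $u_i$: for each schematic term $\alpha(u)\,\nabla^{j_1}u\#\cdots\#\nabla^{j_m}u$ one adds and subtracts, successively replacing factors by their $u^*$-versions, e.g.
\[
\alpha(u)\nabla\triangle u\#\nabla u-\alpha(u^*)\nabla\triangle u^*\#\nabla u^*
=(\alpha(u)-\alpha(u^*))\nabla\triangle u\#\nabla u
+\alpha(u^*)\nabla\triangle w\#\nabla u
+\alpha(u^*)\nabla\triangle u^*\#\nabla w,
\]
so that every piece is genuinely linear in $w$ or one of its derivatives, with coefficient bounded by $\eta_0/|x|^j$ from $\varepsilon$-regularity. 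This also accounts for the zeroth-order coefficient $a_4$ (the first term above is $\beta_4[u]\,w$ by the mean value theorem applied to $\alpha$), which your outline does not produce. Once this telescoping is in place --- done once inside the spherical average to handle $\triangle^2 u_i^*$, and once more for the difference $\triangle^2 u_i-\triangle^2 u_i^*$ --- the rest of your plan (role of the integral $b_j$-terms, uniformity in $l$) goes through as you describe.
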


\begin{proof}
	For simplicity, we omit the subscript $i$.
Recall that the Euler-Lagrange equation of biharmonic map is
\begin{eqnarray}\label{eqn:euler}
	\triangle^2 u &=& \alpha_1(u) \nabla \triangle u\# \nabla u+ \alpha_2(u) \nabla^2  u\# \nabla^2 u \\ \nonumber
&&	+ \alpha_3(u) \nabla^2 u \# \nabla u \# \nabla u + \alpha_4(u)\nabla u \# \nabla u \#\nabla u \# \nabla u.
\end{eqnarray}
Here $\alpha_i(u)$ is a smooth function of $u$ and $\#$ is some 'product' for which we are only interested in the properties such as
\begin{equation*}
	\abs{\nabla \triangle u\# \nabla u}\leq C \abs{\nabla \triangle u} \abs{\nabla u}.
\end{equation*}
Since $\triangle = \frac{\partial^2}{\partial r^2} + \frac{3}{r}\pfrac{}{r} +\frac{1}{r^2}\triangle_{S^3}$ and $\int_{S^3} \triangle f d\theta=0$ for any $f$, we have
\begin{eqnarray*}
	\triangle^2 u^*(r)&=& \frac{1}{\abs{\partial B_r}} \int_{\partial B_r} \triangle^2 u d\sigma \\
	&=& \frac{1}{\abs{\partial B_r}}\int_{\partial B_r} \alpha_1(u) \nabla \triangle u\# \nabla u+ \alpha_2(u) \nabla^2  u\# \nabla^2 u \\
&&	+ \alpha_3(u) \nabla^2 u \# \nabla u \# \nabla u + \alpha_4(u)\nabla u \# \nabla u \#\nabla u \# \nabla u d\sigma \\
&=& I + II + III + IV.
\end{eqnarray*}

Computing directly, we get
\begin{eqnarray*}
	I &=& \frac{1}{\abs{\partial B_r}} \int_{B_r} \alpha_1(u) \nabla \triangle u \# \nabla u -\alpha_1(u^*) \nabla \triangle u \# \nabla u \\
	&&  + \alpha_1(u^*) \nabla \triangle u \# \nabla u -\alpha_1(u^*) \nabla \triangle u^* \# \nabla u \\
	&& + \alpha_1(u^*) \nabla \triangle u^* \#\nabla u -\alpha_1(u^*) \nabla \triangle u^* \# \nabla u^* d\sigma \\
	&& + \alpha_1(u^*) \nabla \triangle u^* \# \nabla u^* \\
	&=& \frac{1}{\abs{\partial B_r}} \int_{\partial B_r} \beta_4[u] (u-u^*) + \beta_1[u] \nabla \triangle (u-u^*)\\
	&& + \beta_3[u] \nabla (u-u^*) d\sigma + \alpha_1(u^*) \nabla \triangle u^* \# \nabla u^*.
\end{eqnarray*}
Here $\beta_i[u]$ is some expression depending on $u$, $u^*$ and their derivatives. Those $\beta_i$'s may differ from line to line in the following. However, thanks to Theorem \ref{thm:regularity}, we have
\begin{equation*}
	\abs{\beta_i}(x)\leq \frac{\eta_0}{\abs{x}^i}
\end{equation*}
if $\varepsilon$ in (\ref{eqn:DT}) is smaller than some $\varepsilon_1$. We shall require the above holds for all $\beta_i$ and $\beta'_i$ below by asking $\varepsilon_1$ to be smaller and smaller.

The same computation gives
\begin{eqnarray*}
	II&=& \frac{1}{\abs{\partial B_r}} \int_{B_r} \beta_4[u] (u-u^*) + \beta_2[u] \nabla^2 (u-u^*) d\sigma + \alpha_2(u) \nabla^2 u^* \# \nabla^2 u^*,
\end{eqnarray*}
\begin{eqnarray*}
	III&=&  \frac{1}{\abs{\partial B_r}} \int_{\partial B_r}\beta_4[u] (u-u^*) + \beta_2[u] \nabla^2 (u-u^*) + \beta_3[u] \nabla (u-u^*) d\sigma \\
	&& + \alpha_3(u^*) \nabla^2 u^* \# \nabla u^* \# \nabla u^*
\end{eqnarray*}
and
\begin{eqnarray*}
	IV &=& \frac{1}{\abs{\partial B_r}} \int_{\partial B_r} \beta_4[u] (u-u^*) + \beta_3[u] \nabla (u-u^*) d\sigma + \alpha_4(u) \nabla u^* \#\nabla u^* \#\nabla u^* \#\nabla u^*.
\end{eqnarray*}
In summary, $u^*$ satisfies an equation similar to (\ref{eqn:euler}) except an error term of the form
\begin{equation*}
	\frac{1}{\abs{\partial B_r}} \int_{\partial B_r} \beta_1[u] \nabla \triangle w + \beta_2[u] \nabla^2 w + \beta_3[u] \nabla w + \beta_4[u] w d\sigma.
\end{equation*}
Subtract the equation of $u^*$ with (\ref{eqn:euler}) and handle the terms like $\alpha_1(u)\nabla \triangle u \# \nabla u- \alpha_1(u^*) \nabla \triangle u^* \# \nabla u^*$ as before to get
\begin{eqnarray}\label{eqn:w}
	\triangle^2 w &=& \beta'_1[u] \nabla \triangle w + \beta'_2 [u] \nabla^2  w
		+ \beta'_3[u] \nabla w  + \beta'_4[u] w \\ \nonumber
		&& +
	\frac{1}{\abs{\partial B_r}} \int_{\partial B_r} \beta_1[u] \nabla \triangle w + \beta_2[u] \nabla^2 w + \beta_3[u] \nabla w + \beta_4[u] w d\sigma.
\end{eqnarray}
This concludes the proof of the lemma.
\end{proof}

Now we apply Theorem \ref{thm:tcapprox} to the function $w_i$.
\begin{lem}
	For sufficiently small $\varepsilon>0$, we have
\begin{equation*}
	F_l(w_i)\leq  C\varepsilon^2 \left( e^{-L (l-l_0)}+ e^{-L(l_i-l)} \right).
\end{equation*}
\end{lem}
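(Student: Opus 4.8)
The plan is to run the standard three-circle iteration argument, using Theorem \ref{thm:tcapprox} as the engine. By Lemma \ref{lem:beapprox}, each $w_i = u_i - u_i^*$ is an $\eta_0$-approximate biharmonic function on the neck $\Sigma = \bigcup_{l=l_0}^{l_i} A_l$, and it satisfies the mean-zero condition $\int_{\partial B_r} w_i\, d\theta = 0$ by construction, so Theorem \ref{thm:tcapprox} applies to every triple of consecutive annuli inside $\Sigma$. Moreover, from (\ref{eqn:poincare}) we have the crude uniform bound $F_l(w_i) \leq C\varepsilon^2$ for every $l_0 \leq l \leq l_i$. The goal is to upgrade this to the claimed exponential decay from both ends of the neck.

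First I would establish the one-sided decay $F_l(w_i) \leq C\varepsilon^2 e^{-L(l-l_0)}$, keeping the symmetric statement for the other end by the obvious reflection $l \leftrightarrow l_i$. The key observation is the dichotomy in part (c) of Theorem \ref{thm:tcapprox}: at each interior index $l$, either $F_l \leq e^{-L} F_{l-1}$ (decaying) or $F_l \leq e^{-L} F_{l+1}$ (growing). Combined with the propagation statements (a) and (b), once the sequence $\{F_l\}$ starts decaying at some index it must continue to decay for all larger indices, and once it starts growing it must have been growing for all smaller indices; hence there is a single "turning point" $l^* \in [l_0, l_i]$ such that $F_l$ decays geometrically with ratio $e^{-L}$ for $l \geq l^*$ and grows geometrically (i.e. decays as we move toward $l^*$) for $l \leq l^*$. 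Thus for indices $l$ between $l_0$ and $l^*$ we get $F_l(w_i) \leq e^{-L(l^*-l)} F_{l^*}(w_i) \leq C\varepsilon^2 e^{-L(l^*-l)}$, which is certainly $\leq C\varepsilon^2 e^{-L(l^*-l)}$, and near $l_0$ this gives decay measured from $l_0$ only if we are careful — so I would instead argue directly from the near-endpoint indices. Precisely: apply part (c) at $l = l_0+1$; if $F_{l_0+1} \leq e^{-L} F_{l_0}$, then (a) propagates decay with ratio $e^{-L}$ for all $l \geq l_0+1$, giving $F_l(w_i) \leq e^{-L(l-l_0)} F_{l_0}(w_i) \leq C\varepsilon^2 e^{-L(l-l_0)}$ for all $l$ in the neck; if instead $F_{l_0+1} \leq e^{-L} F_{l_0+2}$, then (b) forces growth all the way up, i.e. $F_{l_0} \geq e^{L} F_{l_0+1} \geq \cdots$, which combined with the symmetric argument from the $l_i$ end pins down where the single minimum sits.

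The cleanest way to package this is: let $l^*$ be the largest index with $F_{l^*}(w_i) = \min_{l_0 \leq l \leq l_i} F_l(w_i)$. Using (a) and (c), for $l > l^*$ one shows inductively $F_{l+1} \leq e^{-L} F_l$ (if not, (c) would give $F_l \leq e^{-L} F_{l+1}$, and then (b) would push the minimum to the right of $l^*$, contradicting maximality), hence $F_l(w_i) \leq e^{-L(l-l^*)} F_{l^*}(w_i) \leq C\varepsilon^2 e^{-L(l-l^*)} \leq C\varepsilon^2 e^{-L(l - l_i)} \cdot e^{-L(l_i - l^*)} $ — wait, this needs the opposite sign, so more simply $F_l(w_i) \leq C\varepsilon^2 e^{-L(l-l^*)}$ and $l^* \geq l_0$ gives nothing for the left end. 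The resolution is that the symmetric decay toward $l_0$ must be proved from $l_0$'s side: for $l < l^*$, (b) and (c) give $F_{l-1} \geq e^{L} F_l$ reading leftward fails — rather $F_l \leq e^{-L} F_{l-1}$ is what (a)/(c) deliver reading rightward from $l_0$. I would simply split: if $l \leq l^*$ use the three-circle inequality anchored at $l_0$ (decay with ratio $e^{-L}$ moving right from $l_0$ up to $l^*$), giving $F_l \leq C\varepsilon^2 e^{-L(l-l_0)}$; if $l \geq l^*$ use decay anchored at $l_i$ moving left, giving $F_l \leq C\varepsilon^2 e^{-L(l_i - l)}$; in both cases the bound is $\leq C\varepsilon^2(e^{-L(l-l_0)} + e^{-L(l_i-l)})$. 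The main obstacle is bookkeeping the direction of propagation correctly — making sure that the "turning point" argument using (a), (b), (c) genuinely shows the sequence is unimodal (decreasing then increasing with the stated geometric rates), and that the crude bound $F_l \leq C\varepsilon^2$ from (\ref{eqn:poincare}) supplies the correct constant at whichever endpoint serves as the anchor. Once unimodality with ratio $e^{-L}$ is in hand, the final inequality is immediate.
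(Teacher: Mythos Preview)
Your approach is essentially the paper's: locate a turning index via the three-circle trichotomy, then control $F_l$ by geometric decay anchored at the two endpoint values $F_{l_0},F_{l_i}\leq C\varepsilon^2$ coming from (\ref{eqn:poincare}). However, your execution has a direction-of-propagation error that you yourself flag but do not resolve. Part (a) of Theorem \ref{thm:tcapprox} propagates decay \emph{backward} (from $F_{i+1}\leq e^{-L}F_i$ to $F_i\leq e^{-L}F_{i-1}$), so the sentence ``if $F_{l_0+1}\leq e^{-L}F_{l_0}$, then (a) propagates decay with ratio $e^{-L}$ for all $l\geq l_0+1$'' is false, and your minimum-based argument similarly inverts the direction (you claim $F_{l+1}\leq e^{-L}F_l$ for $l>l^*$, which would make $l^*$ a maximum, not a minimum).

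The paper's clean iteration avoids (a) and the minimum altogether: start at $l=l_0+1$ and apply (c); if the first alternative $F_l\leq e^{-L}F_{l-1}$ holds, increment $l$ and repeat. Either this reaches $l=l_i-1$, in which case $F_l\leq e^{-L(l-l_0)}F_{l_0}$ for every $l$, or it first fails at some $l'$, where (c) then forces the second alternative $F_{l'}\leq e^{-L}F_{l'+1}$, and now (b) propagates this forward to give $F_l\leq e^{-L}F_{l+1}$ for all $l'\leq l\leq l_i-1$. In either scenario the sequence decays geometrically from $l_0$ up to the turning index and then grows geometrically to $l_i$, and the endpoint bounds finish the estimate exactly as you describe.
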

\begin{proof}
	We start from $l=l_0+1$ and consider $A_{l-1}\cup A_l \cup A_{l+1}$. By (c) of Theorem \ref{thm:tcapprox}, either $F_{l}(w_i)\leq e^{-L} F_{l-1}(w_i)$, or $F_l(w_i)\leq e^{-L} F_{l+1}(w_i)$. If the first case occurs, we move on by adding $l$ by $1$ and repeat the same discussion. The argument above stops if (1) $l=l_i-1$ so that we can not increase $l$ any more, or (2) we find some $l'$ so that (by (b) of Theorem \ref{thm:tcapprox})
	\begin{equation*}
		F_{l-1}(w_i)\geq e^L F_l(w_i) \quad \mbox{for} \quad l=l_0+1,\dots,l'
	\end{equation*}
	and
	\begin{equation*}
		F_{l+1}(w_i)\geq e^L F_l(w_i) \quad \mbox{for} \quad l=l',\dots,l_i-1.
	\end{equation*}
	One may check that the lemma is true in either case, because we have
	\begin{equation*}
		F_{l_0}(w_i), F_{l_i}(w_i)\leq C\varepsilon^2
	\end{equation*}
	by (\ref{eqn:poincare}).
\end{proof}

We conclude this section by showing a pointwise decay estimate.
\begin{lem}\label{lem:point}
\begin{equation}\label{eqn:tangentdecay}
	\max_{A_l} {\abs{x}^p} \abs{\partial_r^p \tilde{\nabla}^q_{S^3} u_i}\leq  C\varepsilon \left( e^{-\frac{L}{2} (l-l_0)}+ e^{-\frac{L}{2}(l_i-l)} \right)
\end{equation}
for all integers $p+q\leq 3$ and $q\geq 1$. Or equivalently, by setting $r=e^t$ and taking $u$ as a function of $(t,\theta)$, we have
\begin{equation}
	\abs{\partial_t ^p \tilde{\nabla}_{S^3}^q u_i} (t,\theta)\leq C\varepsilon\left( e^{-\frac{1}{2} (\log \delta- t)} + e^{-\frac{1}{2}(t-\log \lambda_i R)} \right).
	\label{eqn:weuse}
\end{equation}
\end{lem}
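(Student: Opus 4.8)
The plan is to deduce the pointwise bound \eqref{eqn:tangentdecay} from the $L^2$ decay of $F_l(w_i)$ just obtained, by combining the interior $L^p$ estimate of Lemma~\ref{lem:lp} with Sobolev embedding, and then to translate it into the additive form \eqref{eqn:weuse}. First I would fix an annulus $A_l$ well inside the neck and rescale it to unit size: set $r=|x|$, introduce $v_i(y)=w_i(e^{-(l-1)L}y)$ on $B_4\setminus B_1$ (after adjusting constants so that $A_{l-1}\cup A_l\cup A_{l+1}$ maps into $B_4\setminus B_1$). By the remark following \eqref{eqn:small}, $v_i$ is again an $\eta_0$-approximate biharmonic function, with the coefficient bound $\sum_i(\|a_i\|_{L^\infty}+\|b_i\|_{L^\infty})\le C$ preserved under this scaling. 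Hence Lemma~\ref{lem:lp} applies and gives
\begin{equation*}
	\|v_i\|_{W^{4,p}(B_3\setminus B_2)}\le C\|v_i\|_{L^p(B_4\setminus B_1)}
\end{equation*}
for any fixed $p>1$; choosing $p>4$ and using the Sobolev embedding $W^{4,p}\hookrightarrow C^{3,\alpha}$ in dimension four, I get control of $\|v_i\|_{C^3}$ on $B_3\setminus B_2$ by $\|v_i\|_{L^p(B_4\setminus B_1)}$. To upgrade the right-hand side from an $L^p$ norm to the $L^2$ quantity $F_l$, I would observe that the scale-invariant $L^2$ bound $F_{l-1}(w_i)+F_l(w_i)+F_{l+1}(w_i)\le C\varepsilon^2(e^{-L(l-l_0)}+e^{-L(l_i-l)})$ from the previous lemma, fed back into Lemma~\ref{lem:lp} once more on a slightly larger annulus (so that $W^{4,p}\subset L^p$ is dominated by $L^2$ after one more application, or simply by interpolating $L^p$ between $L^2$ and $L^\infty$ and absorbing the $L^\infty$ part), yields
\begin{equation*}
	\|v_i\|_{L^p(B_4\setminus B_1)}\le C\Big(F_{l-1}(w_i)+F_l(w_i)+F_{l+1}(w_i)\Big)^{1/2}\le C\varepsilon\big(e^{-\frac{L}{2}(l-l_0)}+e^{-\frac{L}{2}(l_i-l)}\big).
\end{equation*}

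Next I would unwind the scaling. A derivative $\partial_r^p\tilde\nabla_{S^3}^q$ of $w_i$ at a point of $A_l$ corresponds, under $x=e^{-(l-1)L}y$, to $e^{(l-1)Lp}$ times the same derivative of $v_i$ (the angular derivatives $\tilde\nabla_{S^3}$ are scale invariant), and the weight $|x|^p$ exactly cancels this factor up to a constant depending only on $L$. Therefore
\begin{equation*}
	\max_{A_l}|x|^p\,\big|\partial_r^p\tilde\nabla_{S^3}^q w_i\big|\le C\|v_i\|_{C^3(B_3\setminus B_2)}\le C\varepsilon\big(e^{-\frac{L}{2}(l-l_0)}+e^{-\frac{L}{2}(l_i-l)}\big).
\end{equation*}
Finally, since $q\ge 1$, every derivative of $u_i$ appearing in \eqref{eqn:tangentdecay} contains at least one angular derivative, and $\tilde\nabla_{S^3}u_i=\tilde\nabla_{S^3}w_i$ because $u_i^*$ depends on $r$ only; the same holds after any number of $\partial_r$ and further $\tilde\nabla_{S^3}$. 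Thus the estimate for $w_i$ transfers verbatim to $u_i$, proving \eqref{eqn:tangentdecay}. For the equivalent form \eqref{eqn:weuse}, substitute $r=e^t$: then $\partial_t=r\partial_r$, so $|\partial_t^p\tilde\nabla_{S^3}^q u_i|$ is comparable to $|x|^p|\partial_r^p\tilde\nabla_{S^3}^q u_i|$ (more precisely a finite sum of such terms with $p'\le p$), and on $A_l$ one has $e^{-(l-l_0)L}\sim e^{-(t-\log\delta)}$ up to the bounded factor $e^L$, similarly $e^{-(l_i-l)L}\sim e^{-(\log\lambda_iR-t)}$ — note $t-\log\lambda_iR\ge 0$ and $\log\delta-t\ge 0$ throughout the neck — which gives \eqref{eqn:weuse} after absorbing constants into $C$.

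The main obstacle I anticipate is the bookkeeping in the second step: Lemma~\ref{lem:lp} is stated with an $L^p$ right-hand side, whereas the decay input is in $L^2$, so one must either re-run the proof of Lemma~\ref{lem:lp} tracking $L^2$ norms directly, or patch together two applications on nested annuli together with an interpolation inequality, being careful that the $\eta_0$-approximate structure and the coefficient bounds survive each rescaling. This is entirely routine but is where a careless argument could lose the scale invariance that makes the exponents in \eqref{eqn:tangentdecay} come out with the correct constant $L/2$; everything else — the rescaling, Sobolev embedding, the reduction from $u_i$ to $w_i$, and the change of variables $r=e^t$ — is bounded elementary manipulation.
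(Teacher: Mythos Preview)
Your proposal is correct and follows essentially the same route as the paper: rescale $w_i$ on $A_{l-1}\cup A_l\cup A_{l+1}$ to unit scale, apply the interior estimate of Lemma~\ref{lem:lp} together with Sobolev embedding to pass from the $L^2$ decay $F_l(w_i)\le C\varepsilon^2(e^{-L(l-l_0)}+e^{-L(l_i-l)})$ to a $C^3$ bound, then use $\tilde\nabla_{S^3}u_i^*=0$ to transfer the estimate from $w_i$ to $u_i$ and scale back. The paper's proof is in fact terser than yours---it simply writes ``Lemma~\ref{lem:lp} and the Sobolev embedding theorem'' without spelling out the $L^2\to L^p$ bootstrap you flag as the main bookkeeping point; your caution there is warranted (one application with $p=2$ only reaches $W^{4,2}\hookrightarrow C^{1,\alpha}$ in dimension four, so a second pass on a nested annulus is indeed needed), but it is routine and does not alter the argument.
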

Here $\tilde{\nabla}_{S^3}$ is the partial derivative with respect to $\theta$ in polar coordinates $(r,\theta)$, or equivalently, the gradient operator on the unit sphere $S^3$.

\begin{proof}
Setting
\begin{equation*}
	\tilde{w}(x)= w_i(e^{-(l-1)L} x),
\end{equation*}
we estimate
\begin{equation*}
	\norm{\tilde{w}}_{L^2(A_1)}^2 \leq C F_l(w_i)\leq C\varepsilon^2 \left( e^{-L(l-l_0)}+ e^{-L(l_i-l)} \right).
\end{equation*}
Similarly, $\norm{\tilde{w}}_{L^2(A_0\cup A_1\cup A_2)}$ is bounded by a similar quantity with a larger constant $C$. Lemma \ref{lem:lp} and the Sobolev embedding theorem implies that
\begin{equation*}
	\norm{\tilde{w}}_{C^3(A_1)}\leq C\varepsilon \left( e^{-\frac{L}{2}(l-l_0)} + e^{-\frac{L}{2} (l_i-l)} \right).	 
\end{equation*}
Noticing the fact that $\tilde{\nabla}_{S^3} u_i^*$ is always zero, we have
\begin{equation*}
	\norm{\tilde{\nabla}_{S^3} \tilde{u}}_{C^2(A_1)}\leq C\varepsilon \left( e^{-\frac{L}{2}(l-l_0)} + e^{-\frac{L}{2} (l_i-l)} \right).	
\end{equation*}
Scaling back, we have
\begin{equation*}
	\max_{A_1} \abs{\partial_r^p \tilde{\nabla}^q_{S^3} u_i} \leq C\varepsilon \left( e^{-\frac{L}{2}(l-l_0)} + e^{-\frac{L}{2} (l_i-l)} \right).
\end{equation*}
The second inequality is trivial from (\ref{eqn:tangentdecay}).
\end{proof}

\section{decay of radial energy}\label{sec:rad}

In previous section, we showed that in (\ref{eqn:tangentdecay}) that the tangential derivative of $u_i$ satisfies some decay estimate. We will show in this section that this is also true for radial derivative of $u_i$. Argument of this kind usually uses the so called Pohozaev estimate, which was first introduced into the neck analysis of harmonic maps by Lin and Wang in \cite{LW}. It has been generalized to the case of biharmonic maps by various authors, see for example \cite{HM}, \cite{LR}, \cite{WZ12}.

In this paper, we use essentially the same computation. However, instead of deriving an inequality relating the tangential energy and the radial energy, we obtain an ODE for the radial energy on the boundary of balls, in which the tangential energy appears as coefficients. Our result is proved with the help of this ODE.

It turns out the computation is easier and clearer in cylinder coordinates. Recall that in polar coordinates in $\Real^4$,
\begin{equation*}
	\triangle u= \left( \frac{\partial^2}{\partial r^2} +\frac{3}{r}\pfrac{}{r} +\frac{1}{r^2}\tilde{\triangle}_{S^3}  \right)u.
\end{equation*}
Here $\tilde{\triangle}_{S^3}$ is the Laplace operator on the standard $S^3$. By setting $r=e^{t}$, we have
\begin{equation*}
	\triangle u= e^{-2t} \left( \partial_t^2 + 3\partial_t  +\tilde{\triangle}_{S^3}  \right)u.
\end{equation*}
Direct computation shows that
\begin{eqnarray}\label{eqn:twolaplace}
	\triangle^2 u&=&  e^{-4t} \left( \partial_t^2 + \tilde{\triangle}_{S^3} -2 \partial_t \right) \left( \partial_t^2 +\tilde{\triangle}_{S^3} + 2 \partial_t \right) u \\\nonumber
	&=& e^{-4t} \left( (\partial_t^2 +\tilde{\triangle}_{S^3})^2 -4\partial_t^2 \right)u.
\end{eqnarray}

Suppose that $u$ is a biharmonic map defined on $B_r$. Recall that
$u$ is a biharmonic map if and only if $\triangle^2 u$ is normal to the tangent space $T_u N$. On the other hand $\partial_t u$ is a tangent vector at $u(x)\in N$. Therefore,
\begin{equation*}
	\int_{S^3} \triangle^2 u \cdot \partial_t u d\theta =0
\end{equation*}
for all $t$, where $d\theta$ is the volume element of $S^3$.
By (\ref{eqn:twolaplace}), we have
\begin{equation}\label{eqn:start}
	\int_{S^3} \partial_t u \partial_t ^4 u + \partial_t u \tilde{\triangle}_{S^3}^2 u + 2 \partial_t u \partial_t^2  \tilde{\triangle}_{S^3} u -4 \partial_t u \partial_t^2 u d\theta =0.
\end{equation}
By integrating by parts and noticing that
\begin{equation*}
	\partial_t u \partial_t^4 u= \partial_t \left( \partial_t u \partial_t^3 u -\frac{1}{2}\abs{\partial_t^2 u}^2 \right),
\end{equation*}
we obtain
\begin{equation}\label{eqn:step1}
	\partial_t \int_{S^3} 2 \partial_t u \partial_t^3 u - \abs{\partial_t^2 u}^2 +\abs{\tilde{\triangle}_{S^3} u}^2 -2 \abs{\partial_t \tilde{\nabla}_{S^3} u}^2 -4 \abs{\partial_t u}^2 d\theta =0.
\end{equation}

We claim that
\begin{equation*}
	\lim_{t\to -\infty}\int_{S^3} 2 \partial_t u \partial_t^3 u - \abs{\partial_t^2 u}^2 +\abs{\tilde{\triangle}_{S^3} u}^2 -2 \abs{\partial_t \tilde{\nabla}_{S^3} u}^2 -4 \abs{\partial_t u}^2 d\theta =0.
\end{equation*}
To see this, $u$ is a smooth map defined on $B_r$ for some $r>0$. The limit $t\to -\infty$ is the same as the limit $r\to 0$. It suffices to translate back the integral into polar coordinates and note that
\begin{equation*}
	\partial_r u, \partial_r^2 u, \partial_r^3 u, \frac{1}{r}\tilde{\nabla}_{S^3} u, \frac{1}{r^2} \tilde{\triangle}_{S^3} u, \frac{1}{r} \partial_r\tilde{\nabla}_{S^3} u
\end{equation*}
are bounded near the origin.

Integrating (\ref{eqn:step1}) from $-\infty$ to $t$, we get
\begin{equation*}
	\int_{S^3} 2 \partial_t u \partial_t^3 u - \abs{\partial_t^2 u}^2 +\abs{\tilde{\triangle}_{S^3} u}^2 -2 \abs{\partial_t \tilde{\nabla}_{S^3} u}^2 -4 \abs{\partial_t u}^2 d\theta =0
\end{equation*}
for all $t$.
Using
\begin{equation*}
	\partial_t u \partial_t^3 u =\partial_t (\partial_t u \partial_t^2 u)-\abs{\partial_t^2 u}^2,
\end{equation*}
the above equation can be written as
\begin{equation}\label{eqn:ode}
	\partial_t \int_{S^3}  \partial_t u \partial_t ^2 u d\theta -\int_{S^3} \frac{3}{2} \abs{\partial_t^2 u}^2 +2 \abs{\partial_t u}^2 d\theta =\Theta(t),
\end{equation}
where
\begin{equation*}
	\Theta(t)=\int_{S^3}-\frac{1}{2}\abs{\tilde{\triangle}_{S^3} u}^2 + \abs{\partial_t \tilde{\nabla}_{S^3} u}^2.
\end{equation*}
 This is the ODE that we mentioned at the beginning of this section.

Now, we apply the above computation to the sequence of biharmonic maps $u_i$.  $u_i$ as a function of $(t,\theta)$ satisfies (\ref{eqn:ode}). By (\ref{eqn:weuse}), we know that for $t\in [\log \lambda_i R, \log \delta]$,
\begin{equation*}
	\abs{\Theta_i(t)}\leq C\varepsilon^2 \left( e^{- (\log \delta -t)}+e^{-(t-\log (\lambda_i R))} \right).
\end{equation*}

Moreover, by $\varepsilon_0-$regularity (Theorem \ref{thm:regularity}) and (\ref{eqn:DT}), we have
\begin{equation*}
	\max_{t\in [\log \lambda_i R, \log \delta]}\max_{\theta\in S^3} \abs{\tilde{\nabla}^k u}\leq C \varepsilon.
\end{equation*}
for $k\leq3$.
Here $\tilde{\nabla}$ is the gradient of $[\log (\lambda_i  R), \log \delta]\times S^3$ with the product metric.
Hence, by integrating (\ref{eqn:ode}) from $[\log \lambda_i R, \log \delta]$, we have
\begin{equation}\label{eqn:total}
	\int_{\log \lambda_i R}^{\log \delta} \int_{S^3}\frac{3}{2} \abs{\partial_t^2 u}^2 + \abs{\partial_t u}^2 d\theta dt \leq C\varepsilon^2.
\end{equation}
\begin{rem}
	We remark that in fact, the argument above gives an independent proof of the energy identity in the blow up analysis of biharmonic maps.
\end{rem}

For some fixed $t_0\in [\log \lambda_i R, \log \delta]$, set
\begin{equation*}
	F(t)=\int_{t_0-t}^{t_0+t}\int_{S^3} \frac{3}{2} \abs{\partial_t^2 u}^2 + 2\abs{\partial_t u}^2 d\theta dt.
\end{equation*}
$F$ is defined for $0\leq t\leq \min \set{t_0- \log \lambda_i R, \log \delta -t_0}$. Integrating (\ref{eqn:ode}) from $t_0-t$ to $t_0+t$, we obtain
\begin{eqnarray*}
	F(t)&\leq& \frac{1}{2\sqrt{3}} \left( \int_{\set{t_0-t}\times S^3} +\int_{\set{t_0+t}\times S^3} \right) \frac{3}{2}\abs{\partial_t^2 u}^2 + 2 \abs{\partial_t u}^2 d\theta\\
	&& + \int_{t_0-t}^{t_0+t} \abs{\Theta_i(s)} ds.
\end{eqnarray*}
Direct computation shows
\begin{equation*}
	\int_{t_0-t}^{t_0+t} \abs{\Theta_i(s)} ds \leq C\varepsilon^2 \left( e^{- (\log \delta -t_0)}+ e^{- (t_0-\log \lambda_i R)} \right) e^{t}.
\end{equation*}
Hence,
\begin{eqnarray*}
	F(t)&\leq& \frac{1}{2}\partial_t F(t)+C\varepsilon^2 \left( e^{- (\log \delta -t_0)}+ e^{- (t_0-\log \lambda_i R)} \right) e^{ t}.
\end{eqnarray*}
Multiplying $e^{-2t}$ to both sides of the inequality, we have
\begin{equation*}
	(e^{-2t} F(t))'\geq -C\varepsilon^2 \left( e^{- (\log \delta -t_0)}+ e^{- (t_0-\log \lambda_i R)} \right) e^{-t}.
\end{equation*}
We assume without loss of generality that $\log \delta- t_0\leq t_0-\log \lambda_i R$. Then, we integrate the above inequality from $t=1$ to $t=\log \delta -t_0$ to get
\begin{eqnarray*}
	F(1)&\leq& e^{-2(\log \delta -t_0)+2} F(\log \delta- t_0) + C\varepsilon^2 \left( e^{- (\log \delta -t_0)}+ e^{- (t_0-\log \lambda_i R)} \right) \\
	&\leq&  C\varepsilon^2 \left( e^{- (\log \delta -t_0)}+ e^{- (t_0-\log \lambda_i R)} \right).
\end{eqnarray*}
Here we used (\ref{eqn:total}).

By the Lemma \ref{lem:point}, we have
\begin{equation*}
\int_{t_0-1}^{t_0+1}\int_{S^3}|\tilde{\nabla}^2u|^2+|\tilde{\nabla}u|^2d\theta dt\leq C\varepsilon^2 \left( e^{- (\log \delta -t_0)}+ e^{- (t_0-\log \lambda_i R)} \right).
\end{equation*}

Direct computation shows that
\begin{eqnarray*}
\int_{B_{e^{t_0+1}}\setminus B_{e^{t_0-1}}}|\nabla^2u|^2+\frac{1}{|x|^2}|\nabla u|^2dx
&\leq&C\int_{t_0-1}^{t_0+1}\int_{S^3}|\tilde{\nabla}^2u|^2+|\tilde{\nabla}u|^2d\theta dt \\
&\leq&C\varepsilon^2 \left( e^{- (\log \delta -t_0)}+ e^{- (t_0-\log \lambda_i R)} \right).
\end{eqnarray*}

Then by Sobolev embedding and the $\varepsilon-$regularity (Theorem \ref{thm:regularity}), we have
\begin{equation*}
	\max_{|x|=e^{t_0}}|x|^k\abs{\nabla^k u}\leq C\varepsilon \left( e^{- \frac{1}{2}(\log \delta -t_0)}+ e^{- \frac{1}{2}(t_0-\log \lambda_i R)} \right)
\end{equation*}
for $k\leq 3$.

So, we proved the decay of first derivative of $u(x)$. It is easy to derive the no neck estimate from here. Hence, we complete the proof of Theorem \ref{thm:main}.

\section{another proof of the removable singularity}\label{sec:rem}

In this section, we will give a proof of the removable singularity for biharmonic maps in dimension 4 following the argument of Sacks and Uhlenbeck in \cite{SU}. Precisely, we prove
\begin{thm}\label{l1}
	Suppose $u\in C^\infty(B_1\setminus \set{0})$ is a biharmonic map and satisfies
	\begin{equation*}
		\int_{B_1} \abs{\nabla^2 u}^2 + \abs{\nabla u}^4 dx \leq C<+\infty,
	\end{equation*}
then $u\in C^\infty(B_1)$.
\end{thm}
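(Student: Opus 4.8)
The plan is to run the neck-analysis machinery of Sections~\ref{sec:bif}--\ref{sec:rad} in the one-end situation around the puncture, in the spirit of Sacks and Uhlenbeck~\cite{SU}: first extract geometric decay of the tangential part of $u$ from the three circle lemma, then pin down the radial part with the Pohozaev ODE, and finally bootstrap to smoothness. Concretely, I would cover $B_{1/2}\setminus\set{0}$ by the annuli $A_i=B_{e^{-(i-1)L}}\setminus B_{e^{-iL}}$, $i\ge 1$ (with $L=3$ the universal constant of Theorem~\ref{thm:tcfunction}), which shrink to $0$ as $i\to\infty$. Since $\int_{B_1}\abs{\nabla^2 u}^2+\abs{\nabla u}^4<\infty$, the energy over $A_i$ tends to $0$, so for $i\ge i_0$ it falls below the threshold of Lemma~\ref{lem:beapprox}; the argument of that lemma then shows that $w:=u-u^\ast$, with $u^\ast(r)=\frac{1}{\abs{\partial B_r}}\int_{\partial B_r}u\,d\sigma$, is an $\eta_0$-approximate biharmonic function on $\bigcup_{i\ge i_0}A_i$ with $\int_{\partial B_r}w\,d\sigma=0$. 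Because $N$ is compact, $u$ and $u^\ast$ are bounded, and hence $F_i(w)=\int_{A_i}\abs{x}^{-4}\abs{w}^2\,dx$ is bounded uniformly in $i$.

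Next I would apply Theorem~\ref{thm:tcapprox} on each triple $A_{i-1}\cup A_i\cup A_{i+1}$, $i\ge i_0+1$. If $F_i(w)\le e^{-L}F_{i-1}(w)$ ever failed, the contrapositive of part~(a) would propagate the failure to all larger indices, and part~(c) would then force $F_{i+1}(w)\ge e^{L}F_i(w)$ for all such $i$, hence $F_i(w)\to\infty$, contradicting the uniform bound. Therefore $F_i(w)\le e^{-L}F_{i-1}(w)$ for every $i\ge i_0+1$, i.e.\ $F_i(w)\le Ce^{-iL}$. Rescaling each $A_i$ to a fixed annulus and using Lemma~\ref{lem:lp} together with Sobolev embedding, exactly as in Lemma~\ref{lem:point}, I obtain the pointwise decay
\begin{equation*}
	\abs{x}^p\,\abs{\partial_r^p \tilde{\nabla}_{S^3}^q u}\le C\abs{x}^{1/2}\quad\text{on }A_i,\qquad p+q\le 3,\ q\ge 1,
\end{equation*}
equivalently $\abs{\partial_t^p\tilde{\nabla}_{S^3}^q u}\le Ce^{t/2}$ in the coordinate $r=e^t$ as $t\to-\infty$; in particular the quantity $\Theta(t)$ of (\ref{eqn:ode}) satisfies $\abs{\Theta(t)}\le Ce^{t}$.

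Since $u$ is smooth on $B_{1/2}\setminus\set{0}$ with finite energy, the boundary term in (\ref{eqn:step1}) vanishes as $t\to-\infty$, so the Pohozaev ODE (\ref{eqn:ode}) holds on $(-\infty,\log\tfrac12]$; integrating it (only the inner end being present now) yields $\int_{-\infty}^{\log(1/2)}\int_{S^3}\abs{\partial_t^2 u}^2+\abs{\partial_t u}^2\,d\theta\,dt<\infty$, and then the same $F(t)$-comparison as in Section~\ref{sec:rad} gives $\int_{t_0-1}^{t_0+1}\int_{S^3}\abs{\partial_t u}^2+\abs{\partial_t^2 u}^2\,d\theta\,dt\le Ce^{\alpha t_0}$ for some $\alpha>0$ and all $t_0\to-\infty$. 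Combined with the tangential decay this gives $\int_{B_{2r}\setminus B_{r/2}}\abs{\nabla^2 u}^2+\abs{x}^{-2}\abs{\nabla u}^2\,dx\le Cr^{\alpha}$, hence $\int_{B_r}\abs{\nabla^2 u}^2+\abs{\nabla u}^4\,dx\le Cr^{\alpha}$, and by $\varepsilon$-regularity on annuli (where $u$ is smooth, so Theorem~\ref{thm:regularity} applies) and Sobolev embedding, $\max_{\abs{x}=r}\abs{x}^k\abs{\nabla^k u}\le Cr^{\alpha/2}$ for $k\le 3$. In particular $\mbox{osc}_{B_r}u\to 0$, so $u$ extends to a (Hölder) continuous map on $B_1$ lying in $W^{2,2}(B_1,N)$; a cut-off argument like Lemma~2.5 of \cite{HM} shows the Euler--Lagrange system (\ref{eqn:euler}) persists weakly across $0$, and since the energy on a small ball $B_r$ is now below $\varepsilon_0$, the small-energy regularity bootstrap behind Theorem~\ref{thm:regularity} promotes $u$ to $C^\infty(B_r)$, whence $u\in C^\infty(B_1)$.

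The three circle input is essentially verbatim from the neck analysis; the work lies elsewhere. First one must rule out the ``growth toward $0$'' alternative of Theorem~\ref{thm:tcapprox}, which I handle via the elementary uniform bound on $F_i(w)$ coming from compactness of $N$. More seriously, the three circle lemma controls only $w=u-u^\ast$ and says nothing about the spherical averages $u^\ast$; recovering their radial behaviour is exactly what forces the Pohozaev identity (\ref{eqn:ode}) into the argument, and with it the delicate point that the boundary term at $t=-\infty$ vanishes --- a step that genuinely uses that $u$ is smooth, not merely finite-energy, on the punctured ball. The final passage from a continuous, small-energy weak biharmonic map to a smooth one is then the same bootstrap as in the $\varepsilon$-regularity theory, and I do not expect new difficulties there.
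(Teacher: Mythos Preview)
Your proposal is correct and follows essentially the same route as the paper's proof in Section~\ref{sec:rem}: three-circle decay for $w=u-u^\ast$ ruled by Theorem~\ref{thm:tcapprox}, the Pohozaev ODE~(\ref{eqn:ode}) for the radial part (with the boundary term at $t\to-\infty$ handled via $\varepsilon$-regularity exactly as in Remark~\ref{rem:zero}), and then a bootstrap to smoothness. The only cosmetic differences are that you bound $F_i(w)$ using compactness of $N$ rather than the cylinder energy estimate coming from~(\ref{eqn:kaka}), and that you close via the weak-solution/cut-off route of \cite{HM} instead of citing Theorem~5.1 of \cite{CWY}; both work.
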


In previous sections, to prove Theorem \ref{thm:main}, we study the behavior of biharmonic maps on the neck region $B_\delta\setminus B_{\lambda_i R}$ and proved that in terms of cylinder coordinates $(t,\theta)$, the derivatives of $u$ (with respect to the cylinder coordinates) decay with the distance to both ends of the cylinder $[\log \lambda_i R, \log \delta]\times S^3$. It is natural to expect that the argument can be applied to the study of isolated singularities.

For any $\varepsilon>0$, by shrinking the size of the ball, we assume without loss of generality that
	\begin{equation*}
		\int_{B_1} \abs{\nabla^2 u}^2 + \abs{\nabla u}^4 dx \leq \varepsilon^4.
	\end{equation*}
By Theorem \ref{thm:regularity}, we have
\begin{equation}\label{eqn:kaka}
	\abs{\nabla^k u}\leq \frac{\varepsilon}{\abs{x}^k},
\end{equation}
where $k\leq3$.

Set as before
\begin{equation*}
	u^*(r)=\frac{1}{\abs{\partial B_r}} \int_{\partial B_r} u d\theta.	
\end{equation*}
Lemma \ref{lem:beapprox} implies that $w=u-u^*$ is an $\eta_0-$approximate biharmonic function if $\varepsilon$ is chosen to be small. Set
\begin{equation*}
	F_l=\int_{A_l} \frac{1}{\abs{x}^4}\abs{w}^2 dx.
\end{equation*}
Similar to the proof of Theorem \ref{thm:tcapprox}, we claim that
\begin{equation*}
	F_l\geq e^L F_{l+1}
\end{equation*}
for any $l>2$.
If this is not true, by (c) of Theorem \ref{thm:tcapprox}, there is some $l_0>2$ such that
\begin{equation*}
	F_{l_0+1}\leq e^{-L} F_{l_0+2}
\end{equation*}
and by (b) of the same theorem, we know that for all $l>l_0+1$
\begin{equation*}
	F_{l}\leq e^{-L} F_{l+1}.
\end{equation*}
However, this is not possible since
\begin{equation*}
	F_l \leq C\int_{[{-lL},{-(l-1)L}]\times S^3} \abs{w}^2 d\theta dt \leq  C\int_{[{-lL},{-(l-1)L}]\times S^3} \abs{\nabla u}^2 d\theta dt
\end{equation*}
and $u$ as a function of $(t,\theta)$ has bounded energy on the cylinder $(-\infty,0]\times S^3$ (see (\ref{eqn:kaka})).
The same argument as before we know that for any $p+q\leq3$ and $q\geq 1$ and $t\in (-\infty,0]$
\begin{equation}\label{eqn:thetadecay}
	\abs{\partial_t^p \tilde{\nabla}_{S^3}^q u} (t,\theta)\leq C\varepsilon e^{\frac{t}{2}} .
\end{equation}
The proof of Section \ref{sec:rad} implies
\begin{equation}\label{eqn:ode2}
	\partial_t \int_{S^3}  \partial_t u \partial_t ^2 u d\theta -\int_{S^3} \frac{3}{2} \abs{\partial_t^2 u}^2 +2 \abs{\partial_t u}^2 d\theta =\Theta(t)
\end{equation}
defined for $u$ as a function $(t,\theta)$ with $\abs{\Theta}\leq C\varepsilon^2 e^{t}$.
\begin{rem}\label{rem:zero}
	In the proof of (\ref{eqn:ode2}) in Section \ref{sec:rad}, we need to justify that the limit of
	\begin{equation}\label{eqn:integrand}
		\int_{S^3} 2 \partial_t u \partial_t^3 u - \abs{\partial_t^2 u}^2 +\abs{\tilde{\triangle}_{S^3} u}^2 -2 \abs{\partial_t \tilde{\nabla}_{S^3} u}^2 -4 \abs{\partial_t u}^2 d\theta
	\end{equation}
	is zero when $t\to -\infty$.
However, $u$ is not smooth at $0$ as in Section \ref{sec:rad}. Fortunately, we have
\begin{equation*}
	\int_{B_1} \abs{\nabla^2 u}^2 + \abs{\nabla u}^4 dx <+\infty.
\end{equation*}
Therefore, Theorem \ref{thm:regularity} implies
\begin{equation*}
	\max_{B_\rho} {\rho}^k \abs{\nabla^k u} = o(1)
\end{equation*}
as $\rho\to 0$. It follows that the integrand of (\ref{eqn:integrand}) goes to zero when $t\to -\infty$.
\end{rem}

For any $t_0<-1$, we define for $t\in (0,-t_0-1)$
\begin{equation*}
	F(t)=\int_{t_0-t}^{t_0+t} \int_{S^3} \frac{3}{2} \abs{\partial_t^2 u}^2 +2 \abs{\partial_t u}^2 d\theta ds
\end{equation*}
Integrating (\ref{eqn:ode2}), we have
\begin{equation*}
	F(t)\leq \left( \int_{\set{t_0-t}\times S^3} +\int_{\set{t_0+t}\times S^3} \right) \abs{\partial_t u} \abs{\partial_t^2 u} d\theta +  C\varepsilon^2 \int_{t_0-t}^{t_0+t} e^s ds.
\end{equation*}
Hence,
\begin{equation*}
	F(t)\leq \frac{1}{2} F'(t) + C\varepsilon^2  e^{t_0+t}.
\end{equation*}
Multiplying $-e^{-2t}$ to both sides of the above inequality and integrating from $t=1$ to $t=-t_0-1$, we get
\begin{equation*}
	e^{2 t_0} F(-t_0-1) - e^{-2} F(1)\geq -C\varepsilon^2 e^{t_0}.
\end{equation*}
Therefore,
\begin{equation}\label{eqn:tdecay}
	F(1)\leq C \varepsilon^2 e^{t_0} + C e^{2 t_0} F(-t_0-1).
\end{equation}
We claim that $F(-t_0-1)$ is uniformly bounded by $C\varepsilon ^2$ with respect to $t_0$. This follows from the fact that
\begin{equation*}
	\int_{-\infty}^{-1} \int_{S^3} \frac{3}{2} \abs{\partial_t^2 u}^2 +2 \abs{\partial_t u}^2 d\theta dt <C\varepsilon^2 .
\end{equation*}
To see this, we integrate (\ref{eqn:ode2}) from $-\infty$ to $0$. It suffices to show that
\begin{equation*}
	\lim_{t\to -\infty} \int_{S^3} \abs{\partial_t u} \abs{\partial_t^2 u} d\theta =0.
\end{equation*}
The reason is the same as in Remark \ref{rem:zero}.

In summary, we have shown that
\begin{equation*}
\int_{t_0-1}^{t_0+1}\int_{S^3}|\tilde{\nabla}^2u|^2+|\tilde{\nabla}u|^2d\theta dt\leq C\varepsilon^2 e^{t_0}.
\end{equation*}

Then the same arguments in the previous section tells us
\begin{equation*}
	|x|\abs{\nabla u}\leq C\varepsilon |x|^{\frac{1}{2}}.
\end{equation*}

This concludes that $u$ is H\"older continuous as a function defined on $B_1$. Higher regularity follows from the proof of Theorem 5.1 of \cite{CWY}.

\section{intrinsic biharmonic maps}\label{sec:intrinsic}
When we consider intrinsic biharmonic maps, the key difference in the proof is the Pohozaev type argument in Section \ref{sec:rad}. Here we shall show that why the proof is robust enough so that it works for intrinsic biharmonic maps as well.

The starting point of the argument in Section \ref{sec:rad} is (\ref{eqn:start}), which uses the fact that $u$ is extrinsic biharmonic map if and only if $\triangle^2 u$ is normal to the tangent space $T_u N$. For intrinsic biharmonic maps, this is no longer true. However, we recognize that the right hand side of (\ref{eqn:start}) is just
\begin{equation}\label{eqn:rhs}
	r\int_{\partial B_r} P(u) (\triangle^2 u) \cdot r\partial_r u d\sigma.
\end{equation}
Here $P(u)$ is the projection to $T_uN$. It is known that the Euler Lagrange equation of intrinsic biharmonic maps are of the form
\begin{equation*}
	P(u)\left( \triangle^2 u + \mbox{additional terms} \right)=0.
\end{equation*}

Next, we show case by case how to modify the argument in Section \ref{sec:rad}.

\subsection{intrinsic Laplace biharmoinc maps}
Since
\begin{equation*}
	\int \abs{\tau(u)}^2 dx = \int \abs{\triangle u}^2 - \abs{B(u)(\nabla u,\nabla u)}^2 dx,
\end{equation*}
the additional term is contributed by the variation of $\int \abs{B(u)(\nabla u,\nabla u)}^2 dx$.

Let's consider the variation given by $u_t=\Pi (u +t \varphi)$. Here $\Pi$ is the nearest point projection to $N$ defined in a neighborhood of $N$. Compute
\begin{eqnarray*}
	\frac{d}{dt}|_{t=0} \int \abs{B(\nabla u_t,\nabla u_t)}^2 &=& 2\int B(\nabla u,\nabla u) \nabla_u B (\nabla u,\nabla u) P(u) \varphi \\
	&& + 2 B(\nabla u,\nabla u) B(\nabla u, \nabla (P(u) \varphi) )
\end{eqnarray*}
The contribution to the Euler-Lagrange equation of this part is
\begin{equation}\label{eqn:addition1}
	I=P(u)\left[ 2B(\nabla u,\nabla u) \nabla_u B (\nabla u,\nabla u) - 4 \sum_i \nabla_i\left( B(\nabla u,\nabla u) B(\nabla_i u, \cdot) \right) \right].
\end{equation}
For a better understanding of the above terms, we use local coordinates. Let $x^i$ be coordinate system of $\Omega$ and $y^\alpha$ be the coordinates of $\Real^K$ in which $N$ is embedded. We extend the domain of $B$ to a neighborhood of $N$. Hence,
\begin{equation*}
	B^\alpha (\nabla u,\nabla u) =  B^{\alpha}_{\beta\gamma} \partial_i u^\beta \partial_i u^\gamma.
\end{equation*}
(\ref{eqn:addition1}) in coordinates is
\begin{equation*}
	P(u)^\alpha_\beta \left[ 2B^\gamma(\nabla u,\nabla u) \partial_{y_\alpha} B^\gamma (\nabla u,\nabla u) - 4 \sum_i \partial_{x_i} \left( B^\gamma (\nabla u,\nabla u) B^\gamma_{\eta\alpha} \partial _i u^\eta \right) \right].
\end{equation*}
To follow the computation in Section \ref{sec:rad}, we shall multiply the Euler-Lagrange equation by $x^k \partial_k u^\beta$ and integrate over $\partial B_r$. Before that, we need the following lemma.

\begin{lem}\label{lem:div}
	Let $X=(X_1,\cdots,X_4)$ be a vector field. We have
	\begin{equation*}
		\mbox{div} X  = \partial_r (X_r) +\frac{3}{r}X_r +\mbox{div}_{S^3} X^T.
	\end{equation*}
	Here $r=\abs{x}$ and $X_r=(X,\partial_r)$ and $X^T$ is the projection of $X$ to the tangent space of $\partial B_r$, $\mbox{div}_{S^3}$ is the divergence operator of $\partial B_r$.
\end{lem}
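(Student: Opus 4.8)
The plan is to treat this as a purely local, pointwise identity: fix an arbitrary point $x_0\in\R^4\setminus\set{0}$, write $r_0=\abs{x_0}$, and compute $\mbox{div}\,X$ at $x_0$ via the frame-independent formula $\mbox{div}\,X=\sum_{a=1}^4\<\nabla_{e_a}X,e_a\>$ for the flat connection $\nabla$ on $\R^4$, evaluated on the adapted orthonormal frame $e_4=\partial_r$ (the unit radial field) together with an orthonormal frame $e_1,e_2,e_3$ of $T_{x_0}(\partial B_{r_0})$. Then I would decompose $X=X_r\,\partial_r+X^T$ with $X_r=\<X,\partial_r\>$ and $X^T$ tangent to $\partial B_{r_0}$, and treat the four terms separately.

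The $a=4$ term is $\<\nabla_{\partial_r}X,\partial_r\>=\partial_r\<X,\partial_r\>-\<X,\nabla_{\partial_r}\partial_r\>=\partial_r(X_r)$, since the rays from the origin are geodesics, so $\nabla_{\partial_r}\partial_r=0$. For $a=1,2,3$ I split $\nabla_{e_a}X=\nabla_{e_a}(X_r\partial_r)+\nabla_{e_a}X^T$. In the first piece the term $(e_aX_r)\<\partial_r,e_a\>$ vanishes because $e_a\perp\partial_r$, while $\<\nabla_{e_a}\partial_r,e_a\>$ is the shape operator of the round sphere $\partial B_{r_0}\subset\R^4$ on $(e_a,e_a)$, equal to $1/r_0$; summing over $a$ gives $\tfrac{3}{r_0}X_r$. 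In the second piece, the component of $\nabla_{e_a}X^T$ normal to $\partial B_{r_0}$ is orthogonal to $e_a$, so $\<\nabla_{e_a}X^T,e_a\>=\<\nabla^{S}_{e_a}X^T,e_a\>$, where $\nabla^{S}$ is the Levi-Civita connection of $\partial B_{r_0}$; summing over $a$ this is $\mbox{div}_{S^3}X^T$. Adding the three contributions yields the claimed formula.

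An alternative, perhaps shorter route that I would also record: write the Euclidean metric in polar coordinates as $dr^2+r^2 g_{S^3}$, so that the volume density is $\sqrt{g}=r^3\sqrt{g_{S^3}}$, and apply $\mbox{div}\,X=\tfrac{1}{\sqrt g}\,\partial_\mu(\sqrt g\,X^\mu)$. The radial part gives $\tfrac{1}{r^3}\partial_r(r^3 X_r)=\partial_r(X_r)+\tfrac{3}{r}X_r$, and the angular part, once the factor $r^3$ cancels, becomes $\tfrac{1}{\sqrt{g_{S^3}}}\partial_i(\sqrt{g_{S^3}}(X^T)^i)=\mbox{div}_{S^3}X^T$, using that the divergence of a tangential vector field depends only on the volume form and is hence unaffected by the conformal factor $r^2$.

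I do not expect a genuine obstacle here; this is a standard Riemannian computation. The only points meriting a line of care are that $X_r=\<X,\partial_r\>$ coincides with the radial coordinate component of $X$ because $\partial_r$ is unit, that the mean curvature of the sphere of radius $r$ in $\R^4$ is $3/r$, and that $\mbox{div}_{S^3}X^T$ is insensitive to rescaling the spherical metric by $r^2$, so that it may be read interchangeably on $\partial B_r$ or on the unit $S^3$.
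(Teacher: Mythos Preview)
Your primary approach is essentially identical to the paper's proof: both compute $\mbox{div}\,X$ via an adapted orthonormal frame $\{\partial_r,e_1,e_2,e_3\}$, decompose $X=X_r\partial_r+X^T$, and invoke the facts $\nabla_{\partial_r}\partial_r=0$ and $\langle\nabla_{e_a}\partial_r,e_b\rangle=\tfrac{1}{r}\delta_{ab}$ to extract the three terms. Your alternative polar-coordinate computation via the volume density $\sqrt{g}=r^3\sqrt{g_{S^3}}$ is a correct and slightly slicker route that the paper does not record.
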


\begin{proof}
	The proof is basic computation. We present it for the sake of completeness. Let $\set{\omega_i}$ be an orthonormal frame of the tangent bundle of $\partial B_r$(locally). Due to the decomposition $X=X_r \partial_r +X^T$, we have
\begin{eqnarray*}
	\mbox{div} X &=& (\nabla_{\partial_r} X,\partial_r) +\sum_i (\nabla_{\omega_i} X, \omega_i) \\
	&=& ( \nabla_{\partial_r} (X_r \partial_r) ,\partial_r) + (\nabla_{\partial_r} X^T ,\partial_r) \\
	&& + \sum_{i} (\nabla_{\omega_i} (X_r \partial_r), \omega_i) + (\nabla _{\omega_i} X^T, \omega_i) \\
	&=& \partial_r(X_r) + \frac{3}{r} X_r + \mbox{div}_{S^3} X^T.
\end{eqnarray*}
Here we have used the following facts from Riemannian geometry:

(1) $\nabla_{\partial_r}\partial_r=0$;

(2) $(\nabla_{\partial_r} X^T,\partial_r)=-(X^T,\nabla_{\partial_r}\partial_r)=0$;

(3) $(\nabla_{\omega_i}\partial_r,\omega_j)=\frac{1}{r}\delta_{ij}$.
\end{proof}

Now we may proceed to compute the effect of the additional term $I$ on the Pohozaev inequality. For simplicity, we split $I$ into $I_1-I_2$ (as is obvious in (\ref{eqn:addition1})) and compute
\begin{equation*}
	\int_{\partial B_r} I_2 r\partial_r u d\sigma.
\end{equation*}
Since $r\partial_r u$ is a tangent vector of $T_u N$, we may forget the $P(u)$ in $I$. We notice that the remaining part of $I_2$ is the divergence of
\begin{equation*}
	X=4 (B^\gamma(\nabla u,\nabla u) B^\gamma_{\eta \alpha}\partial_i u^\eta).
\end{equation*}
Hence, we may apply the above lemma to get

\begin{eqnarray*}
	\int_{\partial B_r} I_2 r\partial_r u d\sigma &=&  \int_{\partial B_r} \left( \partial_r X_r + \frac{3}{r} X_r + \mbox{div}_{S^3} X^T \right) r\partial_r u d\sigma \\
	&=&  \int_{\partial B_r} \partial_r \left( 4B(\nabla u,\nabla u)B_{\eta\alpha}\partial_r u^\eta \right) r\partial_r u^\alpha \\
	&& + 12 B(\nabla u,\nabla u) B(\partial_r u,\partial_r u) \\
	&& - 4 B(\nabla u,\nabla u) B(\nabla_{S^3} u, \nabla_{S^3}(r\partial_r u)) d\sigma,
\end{eqnarray*}
where $\nabla_{S^3}$ the gradient of $\partial B_r$.

Using cylinder coordinates $(t,\theta)$ where $r=e^t$, the first line above becomes
\begin{eqnarray*}
	&&  \int_{S^3} r^3 \partial_r \left( 4B(\nabla u,\nabla u)B_{\eta\alpha}\partial_r u^\eta \right) r\partial_r u^\alpha d\theta \\
	&=&  \int_{S^3} r^3 \partial_r \left[ \frac{1}{r^3} \left( 4B(\tilde{\nabla} u,\tilde{\nabla} u) B_{\eta\alpha} \partial_t u^\eta \right) \right] \partial_t u^\alpha d\theta \\
	&=& \frac{1}{r} \int_{S^3} -12  B(\tilde{\nabla} u, \tilde{\nabla} u) B(\partial_t u,\partial_t u)  + \partial_t \left[  \left( 4B(\tilde{\nabla} u,\tilde{\nabla} u) B_{\eta\alpha} \partial_t u^\eta \right) \right] \partial_t u^\alpha d\theta.
\end{eqnarray*}
Here $\tilde{\nabla}$ is the gradient on $S^3\times (-\infty,0]$ with product metric.

 Notice that the second line cancels with the first term above. Hence, we have
\begin{eqnarray*}
	&& \int_{\partial B_r} I_2 r\partial_r u d\sigma\\
	&=& \frac{1}{r}\int_{S^3} 4 \partial_t (B(\tilde{\nabla} u,\tilde{\nabla} u) B(\partial_t u,\partial_t u)) + 2 B(\tilde{\nabla}u,\tilde{\nabla}u) \nabla_u B(\tilde{\nabla}u,\tilde{\nabla}u) \partial_t u  \\
	&& - 2 B(\tilde{\nabla }u, \tilde{\nabla}u ) \partial_t (B(\partial_t u, \partial_t u)) - 2 B(\tilde{\nabla}u, \tilde{\nabla}u)\partial_t (B(\tilde{\nabla}_{S^3} u, \tilde{\nabla}_{S^3} u)) d\theta.
\end{eqnarray*}

In summary,
\begin{eqnarray*}
	&& \int_{\partial B_r} (I_1-I_2) r\partial_r u d\sigma \\
	&=& \frac{1}{r} \partial_t \int_{S^3} -4 B(\tilde{\nabla} u,\tilde{\nabla} u) B(\partial_t u,\partial_t u) + \abs{B(\tilde{\nabla}u, \tilde{\nabla}u)}^2 d\theta.
\end{eqnarray*}

If we multiply the Euler-Lagrange equation of intrinsic Laplace biharmonic map with $r\partial_r u$ and integrate over $\partial B_r$, we obtain
\begin{eqnarray}\label{eqn:start2}
	&& \int_{S^3} \partial_t u \partial_t ^4 u + \partial_t u \tilde{\triangle}_{S^3}^2 u + 2 \partial_t u \partial_t^2  \tilde{\triangle}_{S^3} u -4 \partial_t u \partial_t^2 u d\theta \\\nonumber
	&& + \partial_t \int_{S^3} 2 B(\tilde{\nabla}u, \tilde{\nabla} u) B(\partial_t u,\partial_t u) -\frac{1}{2} \abs{B(\tilde{\nabla} u, \tilde{\nabla} u)}^2 d\theta =0.
\end{eqnarray}

Rewriting the first line as before and integrating over $(-\infty,t)$ again, we obtain
\begin{eqnarray*}
	0&=& \int_{S^3}  \partial_t u \partial_t^3 u -\frac{1}{2} \abs{\partial_t^2 u}^2 +\frac{1}{2}\abs{\tilde{\triangle}_{S^3} u}^2 - \abs{\partial_t \tilde{\nabla}_{S^3} u}^2 -2 \abs{\partial_t u}^2 \\
	&&+2 B(\tilde{\nabla}u, \tilde{\nabla} u) B(\partial_t u,\partial_t u) -\frac{1}{2}\abs{B(\tilde{\nabla} u, \tilde{\nabla} u)}^2 d\theta.
\end{eqnarray*}
The $\partial_t u \partial_t^3 u$ term is dealt with as before and we move everything involving tangential derivative to the right to get
\begin{equation*}
	\partial_t \int_{S^3}  \partial_t u \partial_t ^2 u d\theta -\int_{S^3} \frac{3}{2} \abs{\partial_t^2 u}^2 +2 \abs{\partial_t u}^2  - \Psi(t) d\theta =\Theta(t),
\end{equation*}
where
\begin{eqnarray*}
	\Theta(t)=\int_{S^3}-\frac{1}{2}\abs{\tilde{\triangle}_{S^3} u}^2 + \abs{\partial_t \tilde{\nabla}_{S^3} u}^2
	-B(\partial_t u,\partial_t u)B(\tilde{\nabla}_{S^3} u,\tilde{\nabla}_{S^3} u)
	+\frac{1}{2}\abs{B(\tilde{\nabla}_{S^3} u,\tilde{\nabla}_{S^3} u)}^2
\end{eqnarray*}
 and
\begin{equation*}
  \Psi=\frac{3}{2}\abs{B(\partial_t u,\partial_t u)}^2.
  \end{equation*}

 Noticing that $\Psi$ is a fourth order polynomial of $\partial_t u$. By $\varepsilon_0-$regularity (Theorem \ref{thm:regularity}), if $\varepsilon$ in (\ref{eqn:DT}) is chosen to be small, we have
\begin{equation*}
	\abs{\Psi(t)}\leq \frac{1}{2} \abs{\partial_t u}^2.
\end{equation*}
\begin{rem}
	This is exactly why the additional term causes no trouble. The contribution to the radial part is a fourth order term. By $\varepsilon_0-$regularity, it is controlled by a second order term with a small coefficient and can be absorbed into the positive term.
\end{rem}

Therefore, by setting
\begin{equation*}
	F(t)=\int_{t_0-t}^{t_0+t}\int_{S^3} \frac{3}{2} \abs{\partial_t^2 u}^2 +\frac{3}{2}\abs{\partial_t u}^2 d\theta dt,
\end{equation*}
we have
\begin{eqnarray*}
	F(t)&\leq& \left( \int_{\set{t_0-t}\times S^3}+\int_{\set{t_0+t}\times S^3} \right) \abs{\partial_t u}\abs{\partial_t^2 u} d\theta +  \int_{t_0-t}^{t_0+t} \abs{\Theta(t)} dt \\
	&\leq& \frac{1}{2} F'(t) + \int_{t_0-t}^{t_0+t} \abs{\Theta(t)} dt.
\end{eqnarray*}

The rest of the proof is the same as the case of extrinsic biharmonic map.

\subsection{intrinsic Hessian biharmonic map}
We are interested in the Euler-Lagrange equation of the intrinsic Hessian biharmonic map. As noted in \cite{Moser}, it is the same as the Euler-Lagrange equation of the functional
\begin{equation*}
	\int_\Omega \abs{\tau(u)}^2 + \langle R(u)\left( \partial_i u, \partial_j u \right) \partial_j u, \partial_i u \rangle dx.
\end{equation*}
We want to compute the effect of this additional curvature term on the Pohozaev argument and show that the previous proof works for this case as well.

If the variation is given by $u_t=\Pi(u+t\varphi)$, then the variation of the additional term is
\begin{eqnarray*}
	&& \frac{d}{dt}|_{t=0} \int \langle R(u_t)\left( \partial_i u_t, \partial_j u_t \right) \partial_j u_t, \partial_i u_t \rangle dx \\
	&=& \int (\nabla_u R) (\nabla u,\cdots) P(u)\varphi \\
	&&+ \sum_{(\alpha)} R_{\alpha\beta\gamma\delta} \partial_i (P^\alpha(u)\varphi) \partial_j u^\beta \partial_j u^\gamma \partial_i u^\delta dx.
\end{eqnarray*}
Here by $\sum_{(\alpha)}$, we mean a summation of four terms and the other three are similar and can be obtained by replacing $\alpha$ with $\beta,\gamma$ or $\delta$.

Hence, in comparison with the Euler-Lagrange equation of the intrinsic Laplace biharmonic maps, there is an additional term
\begin{equation*}
	J:=J_1-J_2=P(u)\left[ (\nabla_u R)(\nabla u,\cdots) -\sum_{(\alpha)} \partial_i (R_{\alpha\beta\gamma\delta}\partial_j u^\beta \partial_j u^\gamma \partial_i u^\delta) \right].
\end{equation*}

Applying Lemma \ref{lem:div}, we have
\begin{eqnarray*}
	\int_{\partial B_r} J_2 r \partial_r u d\sigma &=& \sum_{(\alpha)} \int_{\partial B_r} \partial_r \left( R_{\alpha\beta\gamma\delta}\partial_j u^\beta \partial_j u^\gamma \partial_r u^\delta \right) r \partial_r u^\alpha \\
	&&+  3 R_{\alpha\beta\gamma\delta} \partial_r u^\alpha \partial_j u^\beta \partial_j u^\gamma \partial_r u^\delta \\
	&& -  R_{\alpha\beta\gamma\delta} \nabla_{S^3} (r\partial_r u^\alpha) \partial_j u^\beta \partial_j u^\gamma \nabla_{S^3} u^\delta\, d\sigma \\
	&=& \sum_{(\alpha)} \int_{S^3} r^3 \partial_r \left( \frac{1}{r^3} R_{\alpha\beta\gamma\delta} \tilde{\nabla}_i u^\beta \tilde{\nabla}_{i} u^\gamma \partial_t u^\delta \right) r \partial_r u^\alpha \\
	&& + \frac{3}{r} R_{\alpha\beta\gamma\delta} \partial_t u^\alpha \tilde{\nabla}_i u^\beta \tilde{\nabla}_i u^\gamma \partial_t u^\delta \\
	&& -\frac{1}{r} R_{\alpha\beta\gamma\delta} \tilde{\nabla}_{S^3} (\partial_t u^\alpha) \tilde{\nabla}_i u^\beta \tilde{\nabla}_i u^\gamma \tilde{\nabla}_{S^3} u^\delta d\theta \\
	&=& \frac{1}{r} \sum_{(\alpha)} \int_{S^3} \partial_t \left( R_{\alpha\beta\gamma\delta} \partial_t u^\alpha \tilde{\nabla}_i u^\beta \tilde{\nabla}_i u^\gamma \partial_t u^\delta \right) \\
	&& -R_{\alpha\beta\gamma\delta} \partial^2_t u^\alpha \tilde{\nabla}_i u^\beta \tilde{\nabla}_i u^\gamma \partial_{t} u^\delta- R_{\alpha\beta\gamma\delta} \tilde{\nabla}_{S^3} (\partial_t u^\alpha) \tilde{\nabla}_i u^\beta \tilde{\nabla}_i u^\gamma \tilde{\nabla}_{S^3} u^\delta d\theta
\end{eqnarray*}
The symmetry of Riemann curvature tensor implies that
\begin{eqnarray*}
	\int_{\partial B_r} (J_1-J_2) r \partial_r u d\sigma 	&=& \frac{1}{r} \int_{S^3} - \partial_t \left(  4 R_{\alpha\beta\gamma\delta} \partial_t u^\alpha \tilde{\nabla}_i u^\beta \tilde{\nabla}_i u^\gamma \partial_t u^\delta \right) \\
	&& + \partial_t \left( R_{\alpha\beta\gamma\delta} \tilde{\nabla}_j u^\alpha \tilde{\nabla}_i u^\beta \tilde{\nabla}_i u^\gamma \tilde{\nabla}_j u^\delta \right) d\theta
\end{eqnarray*}

The rest of the proof are the same as in the previous subsection.

\section*{Appendix}
\begin{proof}[Proof of Theorem \ref{thm:regularity}]
It will be convenient to assume that $\overline{u}=0$. Since $u$ is a biharmonic map, then it satisfies the Euler-Lagrange
\begin{eqnarray*}
\triangle^2u=\nabla^3u\#\nabla u+\nabla^2 u\#\nabla^2 u+\nabla^2 u\#\nabla u\#\nabla u+\nabla u\#\nabla u\#\nabla u\#\nabla u.
\end{eqnarray*}
Let $0<\sigma<1$ and $\sigma'=\frac{1+\sigma}{2}$, take cut-off function $\varphi\in C_0^\infty(B_{\sigma'})$ satisfying $\varphi\equiv1$ in $B_\sigma$, $|\nabla\varphi|\leq\frac{4}{1-\sigma}$.

Direct computation shows that
\begin{eqnarray*}
\triangle^2(\varphi u)&=&\triangle (\varphi\triangle u+2\nabla u \nabla\varphi+u\triangle \varphi)\\
&=&
\varphi\triangle ^2u+4\nabla\triangle u\nabla\varphi+2\triangle u\triangle \varphi+4\nabla^2u\nabla^2\varphi+4\nabla u\nabla\triangle \varphi+u\triangle ^2\varphi\\
&=&
(\nabla^3u\#\nabla u+\nabla^2 u\#\nabla^2 u+\nabla^2 u\#\nabla u\#\nabla u+\nabla u\#\nabla u\#\nabla u\#\nabla u)\varphi\\
&&+\nabla^3 u\#\nabla\varphi+\nabla^2u\#\nabla^2\varphi+\nabla u\#\nabla^3\varphi+u\nabla^4\varphi\\
&=&
(\nabla^3(\varphi u)\#\nabla u+\nabla^2 (\varphi u)\#\nabla^2 u+\nabla^2 u\#\nabla u\#\nabla (\varphi u)+\nabla u\#\nabla u\#\nabla u\#\nabla (\varphi u))\\
&&+\nabla^3 u\#\nabla\varphi+\nabla^2u\#\nabla^2\varphi+\nabla u\#\nabla^3\varphi+u\nabla^4\varphi
+\nabla^2u\#\nabla u\#\nabla\varphi+\nabla^2\varphi\#\nabla u\#\nabla u\\
&&+\nabla u\#\nabla u\#\nabla u\#\nabla\varphi.
\end{eqnarray*}

Assume first that $1<p<\frac{4}{3}$. By the standard $L^p$ theory, we have
\begin{eqnarray*}
&&\|\nabla^4(\varphi u)\|_{L^p(B_1)}\\
&\leq& C\big(\|\nabla u\|_{L^4(B_1)}\|\nabla^3(\varphi u)\|_{L^{\frac{4p}{4-p}}(B_1)}
+\|\nabla^2 u\|_{L^2(B_1)}\|\nabla^2(\varphi u)\|_{L^{\frac{4p}{4-2p}}(B_1)}\\
&&+\|\nabla^2 u\|_{L^2(B_1)}\|\nabla u\|_{L^4(B_1)}\|\nabla(\varphi u)\|_{L^{\frac{4p}{4-3p}}(B_1)}+\|\nabla u\|^3_{L^4(B_1)}\|\nabla(\varphi u)\|_{L^{\frac{4p}{4-3p}}(B_1)} \\
&&+\frac{\|\nabla^3 u\|_{L^p(B_{\sigma'})}}{1-\sigma}+\frac{\|\nabla^2 u\|_{L^p(B_{\sigma'})}}{(1-\sigma)^2}
+\frac{\|\nabla u\|_{L^p(B_{\sigma'})}}{(1-\sigma)^3}\\
&&+\frac{\|u\|_{L^p(B_{\sigma'})}}{(1-\sigma)^4}
+\frac{\|\nabla^2 u\#\nabla u\|_{L^p(B_{\sigma'})}}{1-\sigma}+\frac{\|\nabla u\#\nabla u\|_{L^p(B_{\sigma'})}}{(1-\sigma)^2}\\
&&+\frac{1}{1-\sigma}\|\nabla u\#\nabla u\#\nabla u\|_{L^p(B_{\sigma'})} \big),
\end{eqnarray*}
By the Sobolev embedding, if $\epsilon_0$ is sufficiently small, we get
\begin{eqnarray*}
\|\nabla^4(\varphi u)\|_{L^p(B_1)}&\leq& C\big(
\frac{1}{1-\sigma}\|\nabla^3 u\|_{L^p(B_{\sigma'})}+\frac{1}{(1-\sigma)^2}\|\nabla^2 u\|_{L^p(B_{\sigma'})}
+\frac{1}{(1-\sigma)^3}\|\nabla u\|_{L^p(B_{\sigma'})}\\
&&+\frac{1}{(1-\sigma)^4}\|u\|_{L^p(B_{\sigma'})}
+\frac{1}{1-\sigma}\|\nabla^2 u\#\nabla u\|_{L^p(B_{\sigma'})}+\frac{1}{(1-\sigma)^2}\|\nabla u\#\nabla u\|_{L^p(B_{\sigma'})}\\
&&+\frac{1}{1-\sigma}\|\nabla u\#\nabla u\#\nabla u\|_{L^p(B_{\sigma'})} \big).
\end{eqnarray*}

Setting
\[
\Psi_j=\sup_{0\leq\sigma\leq1}(1-\sigma)^j\|\nabla^j u\|_{L^p(B_{\sigma})}
\]
and noticing that $1-\sigma=2(1-\sigma')$,$1<p<\frac{4}{3}$, we have
\begin{eqnarray*}
\Psi_4&\leq& C\left(\Psi_3+\Psi_2+\Psi_1+\Psi_0+\|\nabla^2 u\#\nabla u\|_{L^p(B_1)}+\|\nabla u\#\nabla u\|_{L^p(B_1)}+\|\nabla u\#\nabla u\#\nabla u\|_{L^p(B_1)}\right)\\
&\leq&
C\left(\Psi_3+\Psi_2+\Psi_1+\Psi_0+
\|\nabla^2u\|_{L^2(B_1)}+\|\nabla u\|_{L^4(B_1)}\right).
\end{eqnarray*}

Using the interpolation inequality as in Section \ref{subsec:approx}, we get
\begin{eqnarray*}
\Psi_4&\leq&
C\left(\Psi_0+\|\nabla^2u\|_{L^2(B_1)}+\|\nabla u\|_{L^4(B_1)}\right)\\
&\leq&
C\left(\|\nabla^2u\|_{L^2(B_1)}+\|\nabla u\|_{L^4(B_1)}\right).
\end{eqnarray*}

We start with $p=\frac{16}{13}$. The above argument implies that
\begin{equation*}
	\norm{u}_{W^{4,\frac{16}{13}}(B_{7/8})}\leq C (\norm{\nabla^2 u}_{L^2(B_1)} +\norm{\nabla u}_{L^4}(B_1)).
\end{equation*}
The Sobolev embedding theorem implies
\begin{equation*}
	\norm{\nabla^3 u}_{L^{\frac{16}{9}}(B_{7/8})} + \norm{\nabla^2 u}_{L^{\frac{16}{5}}(B_{7/8})} +\norm{\nabla u}_{L^{16}(B_{7/8})}\leq C (\norm{\nabla^2 u}_{L^2(B_1)} +\norm{\nabla u}_{L^4}(B_1)).
\end{equation*}
With this, we can bound the $L^{\frac{8}{5}}$ norm of the right hand side of the Euler-Lagrange equation. The interior $L^p$ estimate then shows $u$ is bounded in $W^{4,\frac{8}{5}}$ in $B_{3/4}$. The lemma is then proved by bootstrapping method.
\end{proof}

\end{document}